\providecommand{\U}[1]{\protect\rule{.1in}{.1in}}
\newtheorem{theorem}{Theorem}
\newtheorem{example}[theorem]{Example}
\newtheorem{lemma}[theorem]{Lemma}
\newtheorem{proposition}[theorem]{Proposition}
\newtheorem{remark}[theorem]{Remark}
\newenvironment{proof}[1][Proof]{\noindent \textbf{#1.} }{\  \rule{0.5em}{0.5em}}
\begin{document}

\title{{\LARGE On the process of the eigenvalues of a Hermitian L\'{e}vy process}}
\author{Victor P\'{e}rez-Abreu\\Departamento de Probabilidad y Estad\'{\i}stica\\CIMAT, Guanajuato, M\'{e}xico\\pabreu@cimat.mx
\and Alfonso Rocha-Arteaga\\Facultad de Ciencias F\'{\i}sico-Matem\'{a}ticas\\Universidad Aut\'{o}noma de Sinaloa, M\'{e}xico\\arteaga@uas.edu.mx}
\maketitle
\date{\ \ \ \ \ \ \ \ \ \textit{Dedicated to Ole E. Barndorff-Nielsen on the
occasion of his 80th Birthday}}
\tableofcontents

\begin{abstract}
The dynamics of the eigenvalues (semimartingales) of a L\'{e}vy process $X$
with values in Hermitian matrices is described in terms of It\^{o} stochastic
differential equations with jumps. This generalizes the well known
Dyson-Brownian motion. The simultaneity of the jumps of the eigenvalues of $X$
is also studied. If $X$ has a jump at time $t$ two different situations are
considered, depending on the commutativity of $X(t)$ and $X(t-)$. In the
commutative case all the eigenvalues jump at time $t$ only when the jump of
$X$ is of full rank. In the noncommutative case, $X$ jumps at time $t$ if and
only if all the eigenvalues jump at that time when the jump of $X$ is of rank one.

\textbf{Key words}\textit{:\ }Dyson--Brownian motion, infinitely divisible
random matrix, Bercovici--Pata bijection, matrix semimartingale, simultaneous
jumps, non-colliding process, rank one perturbation, stochastic differential
equation with jumps.

\textbf{AMS MSC 2010}: 60B20, 60G51, 60G57, 60E07, 60H05, 60H15.

\end{abstract}

\section{Introduction}

This paper is on a topic that meets a number of subjects where Ole
Barndorff-Nielsen has contributed. One is Ole's interests in the study of
infinitely divisible random matrix models and their associated matrix-valued
processes, like subordinators, L\'{e}vy and Ornstein--Uhlenbeck processes
\cite{BNPA08}-\cite{BNPAR05}, \cite{BNSe07}-\cite{BNS11}. Another is the
connection between classical and free infinite divisibility, where Ole's
papers with Thorbj\o rnsen \cite{BNT04}-\cite{BNT06} drove the study of the
so-called Upsilon transformations of classical infinitely divisible
distributions \cite{BNMS06}, \cite{BNPAT13}, \cite{BNRT08}.

Matrix L\'{e}vy processes with jumps of rank one arise as a covariation
process of two $\mathbb{C}^{d}-$valued L\'{e}vy processes \cite{DPARA} and
they provide natural examples of matrix subordinators and matrix L\'{e}vy
processes of bounded variation, such as those considered in \cite{BNPA08} and
\cite{BNSe07}.

In the context of free probability, Bercovici and Pata \cite{BP} introduced a
bijection from the set of classical infinitely divisible distributions on
$\mathbb{R}$ to the set of free infinitely divisible distributions on
$\mathbb{R}$. This bijection was described in terms of ensembles of Hermitian
random matrices $\left\{  X_{d}:d\geq1\right\}  $ by Benaych-Georges \cite{BG}
and Cabanal-Duvillard \cite{CD}. For fixed $d\geq1$, the associated $d\times
d$ matrix distribution of $X_{d}$ is invariant under unitary conjugations and
it is (matrix) infinitely divisible. The associated Hermitian L\'{e}vy process
$\left\{  X_{d}(t):t\geq0\right\}  $ has been considered in \cite{DPARA} and
it has the property that, in the non pure Gaussian case, the jumps $\Delta
X_{d}(t)=X_{d}(t)-X_{d}(t-)$ are $d\times d$ matrices of rank one, see also
\cite{DRA}, \cite{PAS}.

The simplest example of this connection is the well-known theorem of Wigner
that gives the semicircle distribution (free infinitely divisible) as the
asymptotic spectral distribution of $\left\{  X_{d}:d\geq1\right\}  $ in the
case of the Gaussian Unitary Ensemble (GUE), see \cite{AGZ09}. In this case,
for each fixed $d\geq1$, the associated L\'{e}vy process of $X_{d}$ is the
$d\times d$ Hermitian Brownian motion $\left\{  B(t):t\geq0\right\}
=\{(B_{jk}(t)):t\geq0\}$\ where $(B_{jj}(t))_{j=1}^{d},(\operatorname{Re}%
B_{jk}(t))_{j<k},$ $(\operatorname{Im}B_{jk}(t))_{j<k}$ is a set of $d^{2}$
independent one-dimensional Brownian motions with parameter $\frac{t}%
{2}(1+\delta_{jk}).$

Still in the GUE case, let $\lambda=\left\{  (\lambda_{1}(t),\lambda
_{2}(t),...,\lambda_{d}(t)):t\geq0\right\}  $ be the $d$-dimensional
stochastic process of eigenvalues of $B$. In a pioneering work, Dyson
\cite{Dy62} showed that if the eigenvalues start at different positions
($\lambda_{{\small 1}}(0)>\lambda_{{\small 2}}(0)>...>\lambda_{d}(0)$\ a.s.),
then they never meet at any time ($\lambda_{1}(t)>\lambda_{2}(t)>...>\lambda
_{d}(t)\ \ \forall$ $t>0$ a.s.) and furthermore they form a diffusion process
(and a semimartingale) satisfying the It\^{o} Stochastic Differential Equation
(SDE)
\begin{equation}
\mathrm{d}\lambda_{i}(t)=\mathrm{d}W_{i}(t)+\sum_{j\neq i}\frac{\mathrm{d}%
t}{\lambda_{i}(t)-\lambda_{j}(t)},\quad t\geq0,1\leq i\leq d, \label{1}%
\end{equation}
where $W_{1},...,W_{d}$ are independent one-dimensional standard Brownian
motions (see \cite[Sec. 4.3.1]{AGZ09}). The stochastic process $\left\{
\lambda(t):t\geq0\right\}  $ is called the Dyson non-colliding Brownian motion
corresponding to the GUE, or, in short, Dyson-Brownian motion. The study of
the eigenvalue process of other continuous matrix-valued processes has
received considerable attention, see \cite{Br89}--\cite{Ch92}, \cite{KT04}%
--\cite{AN}. The eigenvalues of all these processes are strongly dependent and
do not collide at any time almost surely.

The aim of this paper is to understand the behavior on time of the eigenvalue
process $\lambda=\{(\lambda_{1}(t),\lambda_{2}(t),...\allowbreak,\lambda
_{d}(t)):t\geq0\}$ of a $d\times d$ Hermitian L\'{e}vy process $X=\left\{
X(t):t\geq0\right\}  $. The first goal is to give conditions for the
simultaneity of the jumps of the eigenvalues. In particular, when the jump of
$X$ is of rank one and $X(t)$ and $X(t-)$ do not commute, we show that a
single eigenvalue jumps at time $t$ if and only if all the eigenvalues jump at
that time. This fact which is due to the strong dependence between the
eigenvalue processes contrasts with the case of the no simultaneity of the
jumps at any time of $d$ independent real L\'{e}vy processes; in Example
\ref{ex1} we give a Hermitian L\'{e}vy process with such eigenvalue process.
We observe that in general, the eigenvalue process is not a L\'{e}vy process
(as in the Dyson-Brownian case\textbf{)} but a semimartingale given in terms
of the Hermitian L\'{e}vy process $X.$

The second goal is to describe the dynamics of the eigenvalue process,
analogously to (\ref{1}). This leads to first considering the appropriate
It\^{o}'s formula for matrix-valued semimartingales with jumps, more general
than those of bounded variation considered by Barndorff-Nielsen and Stelzer
\cite{BNSe07}.

Our main results and the structure of the paper are as follows. In Section 2
we gather several results on Hermitian L\'{e}vy processes, including versions
of It\^{o}'s formula which are useful for studying the corresponding
eigenvalue processes. We also point out conditions under which the spectrum of
$X$ will be simple for every $t>0$ and conditions for the differentiability of
the eigenvalues. In Section 3 we consider the simultaneous jumps of the
eigenvalue (semimartingale) process of a $d\times d$ Hermitian L\'{e}vy
process $X$. When $\Delta X(t)=X(t)-X(t-)$ is not zero and $X(t)$ and $X(t-)$
commute, we show that the eigenvalues jump simultaneously if and only if the
matrix jump $\Delta X(t)$ is of rank $d.$ On the other hand, if $\Delta
X(t)\neq0$ is of rank one and $X(t)$ and $X(t-)$ do not commute, then the
process $X$ jumps at time $t>0$ if and only if all the components of $\lambda$
jump at that time. This leads to analyzing the relation between the
eigenvalues of $X(t)$ and $X(t-)$ when the jump $\Delta X(t)$ is a rank one
matrix. A key step in this direction is the recent paper \cite{RaWo11} on the
spectrum of a rank one perturbation of an unstructured matrix. Extensions to
Hermitian semimartingales are also pointed out. Finally, Section 4 considers
the dynamics of the eigenvalues of a Hermitian L\'{e}vy process, extension to
the Dyson-Brownian motion, showing that a repulsion force appears in the
bounded variation part of the eigenvalues only when there is a Gaussian
component of $X$. One of the main problems to overcome is the fact that even
when $X(t)$ is Hermitian and has a simple spectrum in an open subset almost
surely, $X(t-)$ may not.

We shall use either $X(t)$ or $X_{t}$ to denote a stochastic process in a
convenient way when the dimension or the entries of a matrix have also to be specified.

\section{Preliminaries on Hermitian L\'{e}vy processes}

In this section we gather several results on matrix-valued semimartingales and
general Hermitian L\'{e}vy processes.

Let $\mathbb{M}_{d}=\mathbb{M}_{d}\left(  \mathbb{C}\right)  $ denote the
linear space of $d\times d$ matrices with complex entries with scalar product
$\left\langle A,B\right\rangle =~\mathrm{tr}\left(  B^{\ast}A\right)  $ and
the Frobenius norm $\left\Vert A\right\Vert =\left[  \mathrm{tr}\left(
A^{\ast}A\right)  \right]  ^{1/2}$ where $\mathrm{tr}$ denotes the
(non-normalized) trace. The set of Hermitian random matrices in $\mathbb{M}%
_{d}$ is denoted by $\mathbb{H}_{d}$, $\mathbb{H}_{d}^{0}=$ $\mathbb{H}%
_{d}\backslash\{0\}$ and $\mathbb{H}_{d}^{1}$ is the set of rank one matrices
in $\mathbb{H}_{d}.$

We will use the identification of a $d\times d$ Hermitian matrix
$X=(x_{ij}+\mathrm{i}y_{ij})_{1\leq i,j\leq d}$ with an element in
$\mathbb{R}^{d^{2}}$, that is,%
\begin{equation}
X=(x_{ij}+\mathrm{i}y_{ij})_{1\leq i,j\leq d}\leftrightarrow\widehat{X}%
:=\left(  x_{11},...,x_{dd},\left(  x_{ij},y_{ij}\right)  _{1\leq i<j\leq
d}\right)  ^{\top}\text{.} \label{id}%
\end{equation}
Thus, the set $\mathbb{H}_{d}$ is identified with $\mathbb{R}^{d^{2}}$ and one
has that for $X,Y\in\mathbb{H}_{d},$ $\mathrm{tr}\left(  XY\right)
=\widehat{Y}^{\top}\widehat{X}$ is real.

An $\mathbb{M}_{d}$-valued process $X=\left\{  (x_{ij})(t):t\geq0\right\}  $
is a matrix semimartingale if $x_{ij}(t)$ is a complex semimartingale for each
$i,j=1,...,d$. Let $X=\left\{  (x_{ij})(t):t\geq0\right\}  \in\mathbb{H}_{d}$
and $Y=\left\{  (y_{ij})(t):t\geq0\right\}  \in\mathbb{H}_{d}$ be
semimartingales. Similar to the case of matrices with real entries in
\cite{BNSe07}, the matrix covariation of $X$ and $Y$ is defined as the
$\mathbb{H}_{d}$-valued process $\left[  X,Y\right]  :=\left\{  \left[
X,Y\right]  (t):t\geq0\right\}  $ with entries%
\begin{equation}
\left[  X,Y\right]  _{ij}(t)=\sum\limits_{k=1}^{d}\left[  x_{ik}%
,y_{kj}\right]  (t)\text{,} \label{DefCov}%
\end{equation}
where $\left[  x_{ik},y_{kj}\right]  (t)$ is the covariation of the
$\mathbb{C}$-valued semimartingales $\left\{  x_{ik}(t):t\geq0\right\}  $ and
$\left\{  y_{kj}(t):t\geq0\right\}  $, see \cite[pp 83]{Pr04}. One has a
decomposition into a continuous part and a pure jump part by
\begin{equation}
\left[  X,Y\right]  (t)=\left[  X^{c},Y^{c}\right]  (t)+\sum_{s\leq t}\Delta
X_{s}\Delta Y_{s}^{\ast}\text{,} \label{ForCov}%
\end{equation}
where $\left[  X^{c},Y^{c}\right]  _{ij}(t):=\sum\nolimits_{k=1}^{d}\left[
x_{ik}^{c},y_{kj}^{c}\right]  (t).$ We recall that for any semimartingale $x$,
the process $x^{c}$ is the a.s. unique continuous local martingale $m$ such
that $\left[  x-m\right]  $ is purely discontinuous.

Matrix-valued L\'{e}vy processes are examples of matrix-valued
semimartingales. In particular, so are the Hermitian L\'{e}vy processes.

\subsection{The L\'{e}vy--Khintchine representation and L\'{e}vy--It\^{o}
decomposition}

A $d\times d$ matrix Hermitian L\'{e}vy process $X=\left\{  X(t):t\geq
0\right\}  $ is characterized by the L\'{e}vy--Khintchine representation of
its Fourier transform $\mathbb{E}\mathrm{e}^{\mathrm{itr}(\Theta
X(t))}\allowbreak\ =\ \allowbreak\exp(t\psi(\Theta))$ with Laplace exponent
\[
\psi(\Theta)={}\mathrm{itr}(\Theta\Psi\text{ )}{}-{}\frac{1}{2}\mathrm{tr}%
\left(  \Theta\mathcal{A}\Theta\right)  {}+{}\int_{\mathbb{H}_{d}}\left(
\mathrm{e}^{\mathrm{itr}(\Theta\xi)}{}-1{}-\mathrm{i}\frac{\mathrm{tr}%
(\Theta\xi)}{1+\left\Vert \xi\right\Vert ^{2}}{}\right)  \nu(\mathrm{d}%
\xi),\ \Theta\in\mathbb{H}_{d},
\]
where $\mathcal{A}:\mathbb{H}_{d}\rightarrow\mathbb{H}_{d}$ is a positive
symmetric linear operator $($i.e., $\mathrm{tr}\left(  \Phi\mathcal{A}%
\Phi\right)  \geq0$ for $\Phi\in\mathbb{H}_{d}$ and $\mathrm{tr}\left(
\Theta_{2}\mathcal{A}\Theta_{1}\right)  =\mathrm{tr}\left(  \Theta
_{1}\mathcal{A}\Theta_{2}\right)  $ for $\Theta_{1},\Theta_{2}\in
\mathbb{M}_{\mathbb{H}_{d}})$, $\nu$ is a measure on $\mathbb{H}_{d}$ (the
L\'{e}vy measure) satisfying $\nu(\{0\})=0$ and $\int_{\mathbb{H}_{d}}%
(1\wedge\left\Vert x\right\Vert ^{2})\nu(\mathrm{d}x)<\infty$, and $\Psi
\in\mathbb{H}_{d}$. The triplet $(\mathcal{A},\nu,\Psi)$ uniquely determines
the distribution of $X$.

Interesting matrix Gaussian distributions for some examples of $\mathcal{A}$
are the following (here $\Psi=0$ and $\nu=0$):

(i) $\mathcal{A}=\sigma^{2}\mathrm{I}_{d}$, $\sigma^{2}>0,$ corresponds to the
GUE case of parameter $\sigma^{2}$;

(ii) More generally, $\mathcal{A}\Theta=\Sigma_{1}\Theta\Sigma_{2},$ for
$\Sigma_{1}$, $\Sigma_{2}$ nonnegative definite matrices in $\mathbb{H}_{d},$
corresponds to the matrix Gaussian distribution with Kronecker covariance
$\Sigma_{1}\otimes\Sigma_{2}$;

(iii) $\mathcal{A}\Theta=$ $\mathrm{tr}(\Theta)\sigma^{2}\mathrm{I}_{d}$,
$\sigma^{2}>0$, is the covariance operator of the Gaussian random matrix
$g\mathrm{I}_{d}$ where $g$ is a one-dimensional random variable with zero
mean and variance $\sigma^{2}$.

Any $\mathbb{H}_{d}$-valued L\'{e}vy process $X=\left\{  X(t):t\geq0\right\}
$ with triplet $(\mathcal{A},\nu,\Psi)$ is a semimartingale with the
L\'{e}vy--It\^{o} decomposition
\begin{equation}
X(t)=t\Psi+B_{\mathcal{A}}(t)+\int_{[0,t]}\int_{\left\Vert y\right\Vert \leq
1}y\widetilde{J}_{X}(\mathrm{d}s\mathrm{,d}y)+\int_{[0,t]}\int_{\left\Vert
y\right\Vert >1}yJ_{X}(\mathrm{d}s,\mathrm{d}y)\text{, }t\geq0, \label{LID}%
\end{equation}
where $\left\{  B_{\mathcal{A}}(t):t\geq0\right\}  $ is a $\mathbb{H}_{d}%
$-valued Brownian motion with covariance $\mathcal{A}$, i.e., it is a L\'{e}vy
process with continuous sample paths (a.s.) and each $B_{\mathcal{A}}(t)$ is
centered Gaussian with
\[
\mathbb{E}\left\{  \mathrm{tr(}\Theta_{1}B_{\mathcal{A}}(s){})\mathrm{tr}%
\left(  \Theta_{2}B_{\mathcal{A}}(t){}\right)  {}\right\}  =\min
(s,t)\mathrm{tr}\left(  \Theta_{1}\mathcal{A}\Theta_{2}\right)  {}\text{for
each }\Theta_{1},\Theta_{2}\in\mathbb{H}_{d};
\]
$J_{X}(\cdot,\cdot)$ is a Poisson random measure of jumps on $[0,\infty
)\times\mathbb{H}_{d}^{0}$ with intensity measure $Leb\otimes\nu$, independent
of $\left\{  B_{\mathcal{A}}(t):t\geq0\right\}  $ and the compensated measure
is given by
\[
\widetilde{J}_{X}(\mathrm{d}t,\mathrm{d}y)=J_{X}(\mathrm{d}t,\mathrm{d}%
y)-\mathrm{d}t\nu(\mathrm{d}y).
\]
For a systematic study of L\'{e}vy processes with values in Banach spaces, see
\cite{Ap07}.

\subsection{Hermitian L\'{e}vy processes with simple spectrum}

Hermitian L\'{e}vy processes with simple spectrum (all eigenvalues distinct)
are of special importance in this paper. For each $t>0$, if $X(t)$ has an
absolutely continuous distribution then $X(t)$ has a simple spectrum almost
surely, since the set of Hermitian matrices whose spectrum is not simple has
zero Lebesgue measure \cite{Tao12}, see also \cite{AGZ09}. The absolute
continuity gives also that $X(t)$ is a nonsingular random matrix since for a
Hermitian matrix $A$, $\det(A)$ is a real polynomial in $d^{2}$ variables and
$\left\{  A\in\mathbb{H}_{d}:\det(A)=0\right\}  $ has zero Lebesgue measure in
$\mathbb{R}^{d^{2}}$.

If there is a Gaussian component, i.e., $\mathcal{A\neq}{\Large 0}$, then
$X(t)$ has an absolutely continuous distribution for each $t>0$. Next we point
out a sufficient condition on the L\'{e}vy measure of a Hermitian L\'{e}vy
process without Gaussian component to have an absolutely continuous
distribution and hence a simple spectrum for each fixed time.

Let $\mathbb{S}_{d}$ be the unit sphere of $\mathbb{H}_{d}$. A polar
decomposition of the L\'{e}vy measure $\nu$ is a family of pairs $(\pi
,\rho_{\xi})$ where $\pi$, the spherical component, is a finite measure on
$\mathbb{S}_{d}$ with $\pi(\mathbb{S}_{d})\geq0$ and $\rho_{\xi}$, the radial
component, is a measure on $(0,\infty)$ for each $\xi\in\mathbb{S}_{d}$ with
$\rho_{\xi}((0,\infty))>0$ such that
\[
\nu(E)=\int_{\mathbb{S}_{d}}\pi(\mathrm{d}\xi)\int_{(0,\infty)}1_{E}(u\xi
)\rho_{\xi}(\mathrm{d}u),\qquad E\in\mathcal{B}(\mathbb{H}_{d}^{0}).
\]

We say that $\nu$ satisfies the condition $\mathbf{D}$ if, for each $\xi
\in\mathbb{S}_{d}$, there is a nonnegative function $g_{\xi}$ on $(0,\infty)$
such that $\rho_{\xi}\left(  B\right)  =\int_{B}g_{\xi}\left(  u\right)
\mathrm{d}u$ for any $B\in\mathcal{B}(\mathbb{R}\backslash\left\{  0\right\}
)$ and the following divergence condition holds%
\[
\int_{(0,\infty)}g_{\xi}\left(  u\right)  \mathrm{d}u=\infty\qquad\pi
\text{-}a.e.\text{ }\xi\text{.}%
\]

As an example, recall that a random matrix $M$ is called self-decomposable
(and hence infinitely divisible) if for any $b\in(0,1)$ there exists $M_{b}$
independent of $M$ such that $M$ and $bM+M_{b}$ have the same matrix
distribution. An infinitely divisible random matrix $M$ in $\mathbb{H}_{d}$
with triplet $(\mathcal{A},\nu,\Psi)$ is self-decomposable if and only if
\[
\nu(E)=\int_{\mathbb{S}_{d}}\pi(\mathrm{d}\xi)\int_{(0,\infty)}1_{E}%
(u\xi)\frac{k_{\xi}(u)}{u}(\mathrm{d}u),\qquad E\in\mathcal{B}(\mathbb{H}%
_{d}^{0})\text{,}%
\]
with a finite measure $\pi$ on $\mathbb{S}_{d}$ and a nonnegative decreasing
function $k_{\xi}$ on $(0,\infty)$ for each $\xi\in\mathbb{S}_{d}$. In this
case $\nu$ satisfies the condition $\mathbf{D}$ for $g_{\xi}(u)=k_{\xi}(u)/u$
where we can choose the measure $\pi$ to be zero on the set $\left\{  \xi
\in\mathbb{S}_{d}:k_{\xi}(0+)=0\right\}  $.

The following proposition for Hermitian L\'{e}vy processes follows from the
corresponding real vector case, Theorems $27.10$ and $27.13$ in \cite{Sato1},
by using the identification (\ref{id}) of $\mathbb{H}_{d}$ with $\mathbb{R}%
^{d^{2}}$. We recall that a L\'{e}vy process is nondegenerate if its
distribution for any fixed time is nondegenerate.

\begin{proposition}
\label{abscont} (a) If $\left\{  X(t):t\geq0\right\}  $ is a nondegenerate
L\'{e}vy process in $\mathbb{H}_{d}$ without Gaussian component satisfying
condition $\mathbf{D,}$ then $X(t)$ has absolutely continuous distribution for
each $t>0$.

(b) If $\left\{  X(t):t\geq0\right\}  $ is a nondegenerate self-decomposable
L\'{e}vy process in $\mathbb{H}_{d}$ without Gaussian component, then $X(t)$
has absolutely continuous distribution for each $t>0$.
\end{proposition}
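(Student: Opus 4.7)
The plan is to reduce both assertions to the corresponding theorems for $\mathbb{R}^{n}$-valued L\'evy processes by means of the identification (\ref{id}). Let $T:\mathbb{H}_{d}\to\mathbb{R}^{d^{2}}$ denote the linear bijection $\xi\mapsto\widehat{\xi}$ of real vector spaces; the push-forward $\widehat{X}=TX$ is an $\mathbb{R}^{d^{2}}$-valued L\'evy process whose characteristic triplet is obtained by transporting $(\mathcal{A},\nu,\Psi)$ through $T$. In particular $\widehat{X}$ is nondegenerate (since $T$ preserves nondegeneracy), has no Gaussian component, and its L\'evy measure is the image $\nu\circ T^{-1}$. Because $T$ is a linear bijection, Lebesgue measure on $\mathbb{H}_{d}$ pulls back to Lebesgue measure on $\mathbb{R}^{d^{2}}$ up to a constant Jacobian, so the distribution of $X(t)$ is absolutely continuous on $\mathbb{H}_{d}$ if and only if the distribution of $\widehat{X}(t)$ is absolutely continuous on $\mathbb{R}^{d^{2}}$.

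For part (a), I would verify that condition $\mathbf{D}$ passes through $T$. The unit sphere $\mathbb{S}_{d}$ of $\mathbb{H}_{d}$ is sent by $T$ to a smooth compact hypersurface in $\mathbb{R}^{d^{2}}$ transverse to every ray; composing with a positive ray-rescaling one obtains a diffeomorphism of $\mathbb{S}_{d}$ onto the Euclidean unit sphere $\mathbb{S}^{d^{2}-1}$. A polar decomposition $(\pi,\rho_{\xi})$ of $\nu$ is thus transported to a polar decomposition of $\nu\circ T^{-1}$ with a finite spherical component and absolutely continuous radial components whose densities still integrate to $+\infty$ on $(0,\infty)$ for almost every direction, since the divergence condition is invariant under smooth positive reparameterizations of the radial variable. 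Theorem~27.10 of \cite{Sato1}, applied to the nondegenerate non-Gaussian process $\widehat{X}$, then gives the absolute continuity of $\widehat{X}(t)$, and hence of $X(t)$, for every $t>0$.

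For part (b), self-decomposability is preserved by the invertible linear map $T$, so $\widehat{X}$ is a nondegenerate self-decomposable L\'evy process in $\mathbb{R}^{d^{2}}$ without Gaussian component, and Theorem~27.13 of \cite{Sato1} yields directly the absolute continuity of $\widehat{X}(t)$ for every $t>0$. Alternatively one may derive (b) from (a) by using the polar decomposition of the L\'evy measure of a self-decomposable law recalled just before the proposition: with $g_{\xi}(u)=k_{\xi}(u)/u$ and $\pi$ restricted off the set $\{k_{\xi}(0+)=0\}$, the measure $\nu$ satisfies condition $\mathbf{D}$, so part (a) applies.

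The only substantive work is the bookkeeping that matches the two notions of polar decomposition across $T$ and confirms that the divergence condition is invariant under the ray-rescaling induced by the mismatch between the Frobenius norm on $\mathbb{H}_{d}$ and the Euclidean norm on $\mathbb{R}^{d^{2}}$. This is routine rather than a genuine obstacle, and once it is done the statement is an immediate translation of \cite[Theorems 27.10 and 27.13]{Sato1}.
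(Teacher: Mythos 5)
Your proposal is correct and takes essentially the same route as the paper, whose entire proof is the remark that the result follows from Theorems 27.10 and 27.13 of \cite{Sato1} via the identification (\ref{id}) of $\mathbb{H}_{d}$ with $\mathbb{R}^{d^{2}}$. The bookkeeping you supply---transporting the triplet and the polar decomposition through the linear bijection, and checking that condition $\mathbf{D}$ survives the ray-rescaling caused by the mismatch between the Frobenius and Euclidean norms---is exactly the routine verification the paper leaves implicit.
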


\subsection{Smoothness of the spectrum of Hermitian matrices}

Next we consider several facts about eigenvalues as functions of the entries
of a Hermitian matrix, similar to \cite{AN} for symmetric matrices, that will
be useful in the sequel.

Following \cite{AGZ09}, let $\mathbb{U}_{d}^{VG}$ be the set of unitary
matrices $U=\left(  u_{ij}\right)  ,$ with $u_{ii}>0$ for all $i$, $u_{ij}%
\neq0$ for $i,j$, and all minors of $U$ having non-zero determinants. Let us
denote by $\mathbb{H}_{d}^{VG}$ the set of matrices $X$ in $\mathbb{H}_{d}$
such that
\begin{equation}
X=UDU^{\ast}, \label{decom}%
\end{equation}
where $D$ is a diagonal matrix with entries $\lambda_{i}=D_{ii}$ such that
$\lambda_{1}>\lambda_{2}>\cdots>\lambda_{d}$, $U\in\mathbb{U}_{d}^{VG}$. An
element of $\mathbb{H}_{d}^{VG}$ is called a very good matrix. The set of very
good matrices\textit{ }$\mathbb{H}_{d}^{VG}$ can be identified with an open
subset of $\mathbb{R}^{d^{2}}$. It is known that the complement of
$\mathbb{H}_{d}^{VG}$ has zero Lebesgue measure. If $T:\mathbb{U}_{d}%
^{VG}\rightarrow\mathbb{R}^{d(d-1)}$ is the mapping defined by
\[
T\left(  U\right)  =\left(  \frac{u_{12}}{u_{11}},\ldots,\frac{u_{1d}}{u_{11}%
},\frac{u_{23}}{u_{22}},\ldots,\frac{u_{2d}}{u_{22}},\ldots,\frac{u_{(d-1)d}%
}{u_{(d-1)(d-1)}}\right)  ,
\]
then Lemma $2.5.6$ in \cite{AGZ09} shows that $T$ is bijective, smooth, and
the complement of $\mathbb{U}_{d}^{VG}$ is a closed subset of zero Lebesgue measure.

Let $\mathcal{S}_{d}$ be the open set
\begin{equation}
\mathcal{S}_{d}=\left\{  \left(  \lambda_{1},\lambda_{2},\ldots,\lambda
_{d}\right)  \in\mathbb{R}^{d}:\lambda_{1}>\lambda_{2}>\cdots>\lambda
_{d}\right\}  \label{SS}%
\end{equation}
and let $D_{\lambda}$ be the diagonal matrix $D_{ii}=\lambda_{i}$ for any
$\lambda=\left(  \lambda_{1},\lambda_{2},\ldots,\lambda_{d}\right)
\in\mathcal{S}_{d}$. Then the map $\widetilde{T}:\mathcal{S}_{d}\times
T\left(  \mathbb{U}_{d}^{VG}\right)  \rightarrow\mathbb{H}_{d}^{VG}$ defined
by $\widetilde{T}\left(  \mathbb{\lambda},z\right)  =T^{-1}(z)^{\ast
}D_{\lambda}T^{-1}(z)$ is a smooth bijection and the inverse mapping of
$\widetilde{T}$ satisfies%
\[
\widetilde{T}^{-1}\left(  X\right)  =\left(  \lambda\left(  X\right)
,\widetilde{U}(X)\right)  \text{,}%
\]
where $\widetilde{U}(X)=T(U)$ for $U\in\mathbb{U}_{d}^{VG}$ satisfying
(\ref{decom}). As a consequence of these results, there exist an open subset
$\widetilde{G}\subset\mathbb{R}^{d^{2}}$ such that $\mathbb{R}^{d^{2}%
}\backslash\widetilde{G}$ has zero Lebesgue measure and a smooth function%
\begin{equation}
\Phi:\mathbb{R}^{d^{2}}\rightarrow\mathbb{R}^{d} \label{fi}%
\end{equation}
such that $\lambda\left(  X\right)  =\Phi\left(  X\right)  $ for
$X\in\widetilde{G}$.

\subsection{The It\^{o} Formula for Hermitian L\'{e}vy processes}

We shall first consider a convenient notation for the Fr\'{e}chet derivative
of a function $f$ from $\mathbb{H}_{d}$ to $\mathbb{R}$ (see \cite[X.4]%
{Bh97}), using the identification (\ref{id}) of an element $X=(x_{ij}%
+\mathrm{i}y_{ij})_{1\leq i,j\leq d}$ in $\mathbb{H}_{d}$ as the element
$\left(  x_{11},...,x_{dd},\left(  x_{ij},y_{ij}\right)  _{1\leq i<j\leq
d}\right)  ^{\top}$ in $\mathbb{R}^{d^{2}}$.

The derivative of $f$ at the point $u$ is denoted by $Df(u)$; it can be
thought as a linear operator from $\mathbb{R}^{d^{2}}$ to $\mathbb{R}$. Its
representation as a Hermitian matrix is%
\begin{equation}
Df=\left(  \frac{\partial}{\partial x_{ij}}f+\mathrm{i}\frac{\partial
}{\partial y_{ij}}f1_{\left\{  i\neq j\right\}  }\right)  _{1\leq i\leq j\leq
d}\text{.} \label{DM}%
\end{equation}
Let $L\left(  \mathbb{R}^{d^{2}},\mathbb{R}\right)  $ denote the space of
linear operators from $\mathbb{R}^{d^{2}}$ to $\mathbb{R}$. If the map
$Df:\mathbb{R}^{d^{2}}\rightarrow L\left(  \mathbb{R}^{d^{2}},\mathbb{R}%
\right)  $ defined by $u\rightarrow Df(u)$ is differentiable at a point $u$,
we say that $f$ is twice differentiable at $u$. We denote the derivative of
the map $Df$ at the point $u$ by $D^{2}f(u)$. Recall that $D^{2}f(u)$ is a
bilinear operator from $\mathbb{R}^{d^{2}}\times\mathbb{R}^{d^{2}}$ to
$\mathbb{R}$ which is symmetric, that is, $D^{2}f(u)(x,y)=D^{2}f(u)(y,x)$. We
use the following notation for the first and second partial derivative
operators, where $z_{ij}=x_{ij}$ or $z_{ij}=y_{ij}$, with $y_{ii}=0,$
$i=1,...,d$%
\begin{equation}
\sum\limits_{\left(  i,j\right)  }\frac{\partial}{\partial z_{ij}}%
:=\sum\limits_{1\leq i\leq d}\frac{\partial}{\partial x_{ii}}+2\sum
\limits_{1\leq i<j\leq d}(\frac{\partial}{\partial x_{ij}}+\frac{\partial
}{\partial y_{ij}}), \label{OFD}%
\end{equation}%
\begin{align}
\sum\limits_{\left(  i,j\right)  \left(  k,h\right)  }\frac{\partial^{2}%
}{\partial z_{ij}z_{kh}}  &  :=\sum\limits_{1\leq i\leq j\leq d}\frac
{\partial^{2}}{\partial x_{ii}x_{jj}}+\sum\limits_{1\leq i\leq j\leq d}%
\sum\limits_{1\leq k<h\leq d}(\frac{\partial^{2}}{\partial x_{ij}x_{kh}}%
+\frac{\partial^{2}}{\partial x_{ij}y_{kh}})\nonumber\\
&  +\sum\limits_{1\leq i<j\leq d}\sum\limits_{1\leq k<h\leq d}\frac
{\partial^{2}}{\partial y_{ij}y_{kh}}. \label{OSD}%
\end{align}
For a linear operator $\mathcal{A}:\mathbb{H}_{d}\rightarrow\mathbb{H}_{d}$,
we denote its $d^{2}\times d^{2}$ matrix representation by $\mathcal{A=(}%
A_{ij,kh})$.

Following Barndorff-Nielsen and Stelzer \cite{BNSe07}, we say that for each
open subset $G\subset\mathbb{M}_{d}$, the process $X=\left\{  X_{t}%
:t\geq0\right\}  $ is locally bounded within $G$ if there exist a sequence of
stopping times $\left(  T_{n}\right)  _{n\geq1}$ increasing to infinity almost
surely and a sequence of compact convex subsets $D_{n}\subset G$ with
$D_{n}\subset D_{n+1}$ such that $X_{t}\in D_{n}$ for all $0\leq t<T_{n}$.
Observe that if the process $X$ is locally bounded within $G$, it cannot get
arbitrarily close to the boundary of $G$ in finite time, hence $X_{t}$ and
$X_{t-}$ are in $D_{n}$ at all times $t\geq0$.

The following result can be proved similarly to Proposition $3.10.10$ in
Bichteler \cite{Bi01}, who considered the multivariate case. The bounded
variation multivariate case can be seen in \cite{BNSe07}.

\begin{theorem}
[The It\^{o} formula for matrix semimartingales]\label{ito1} Let $\left\{
X_{t}:t\geq0\right\}  $ be a matrix semimartingale, $G\subset\mathbb{H}_{d}$
an open subset, and $f:G\rightarrow\mathbb{R}$ continuously differentiable. If
$\left\{  X_{t}:t\geq0\right\}  $ is locally bounded within $G$, then $X_{t-}$
belongs to $G$ for all $t>0,$ the process $f\left(  X_{t}\right)  $ is a
semimartingale, and%
\[
f\left(  X_{t}\right)  =f\left(  X_{0}\right)  +\mathrm{tr}\left(
\int\nolimits_{0+}^{t}Df\left(  X_{s-}\right)  dX_{s}\right)  +\frac{1}{2}%
\int\nolimits_{0+}^{t}\sum\limits_{\left(  i,j\right)  \left(  k,h\right)
}\frac{\partial^{2}}{\partial z_{ij}z_{kh}}f\left(  X_{s}\right)  d[z_{ij}%
^{c},z_{kh}^{c}]_{s}%
\]%
\[
+\int\nolimits_{\left(  0,t\right]  \times\mathbb{H}_{d}^{0}}\left[  f\left(
X_{s}\right)  -f\left(  X_{s-}\right)  -\mathrm{tr}\left(  Df\left(
X_{s-}\right)  y\right)  \right]  J_{X}\left(  ds,dy\right)  .
\]

\end{theorem}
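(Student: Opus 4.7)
The plan is to reduce the claim to the classical multidimensional It\^{o} formula via the $\mathbb{R}$-linear isomorphism (\ref{id}) and then reassemble the Euclidean output into the matrix expression displayed in the theorem.

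First I would identify $X$ with the $\mathbb{R}^{d^{2}}$-valued semimartingale $\widehat{X}=(x_{11},\ldots,x_{dd},(x_{ij},y_{ij})_{1\leq i<j\leq d})^{\top}$ and view $f$ as a function on the open subset $\widehat{G}\subset\mathbb{R}^{d^{2}}$ corresponding to $G$ under (\ref{id}); the stated formula implicitly requires $f\in C^{2}$. Local boundedness within $G$ transfers to $\widehat{X}$: on $[0,T_{n})$ the paths of $\widehat{X}$ lie in the compact set $\widehat{D_{n}}\subset\widehat{G}$, so the left limits $\widehat{X}_{t-}$ also lie in $\widehat{D_{n}}$ for every $t\in[0,T_{n}]$. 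Since $T_{n}\uparrow\infty$ a.s., this already yields the assertion $X_{t-}\in G$ for all $t>0$ and allows us to localize the argument on the intervals $[0,T_{n}]$.

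Next, I would invoke the multivariate It\^{o} formula of Bichteler (Proposition $3.10.10$ in \cite{Bi01}) applied to $f$ and the stopped semimartingale $\widehat{X}^{T_{n}}$ inside $\widehat{D_{n}}$. This produces in Euclidean coordinates a first-order drift against the gradient, a Hessian-covariation term over the continuous parts, and the pathwise sum $\sum_{s\leq t}[f(X_{s})-f(X_{s-})-\nabla f(X_{s-})\cdot\Delta\widehat{X}_{s}]$ of jump corrections. That pathwise sum is, by definition of the jump measure $J_{X}$, exactly the integral against $J_{X}$ appearing in the statement. It then remains to translate the first two terms into the matrix conventions (\ref{DM}), (\ref{OFD}) and (\ref{OSD}): expanding $\mathrm{tr}(Df(X_{s-})\,dX_{s})$ using the Hermitian representation (\ref{DM}) of $Df$ reproduces the Euclidean gradient contraction, and the operator (\ref{OSD}) is designed so that its contraction against the matrix covariation (\ref{DefCov})--(\ref{ForCov}) of the continuous parts reproduces the full Hessian-covariation double sum. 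Letting $n\to\infty$ completes the proof.

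The main obstacle is the combinatorial bookkeeping in this translation: each diagonal entry $x_{ii}$ contributes a single real coordinate under (\ref{id}) whereas each off-diagonal Hermitian entry contributes two real coordinates $(x_{ij},y_{ij})$, which is precisely why the factors of $2$ appear in (\ref{OFD})--(\ref{OSD}) and why the Hermitian convention (\ref{DM}) for $Df$ encodes the off-diagonal partials with an imaginary-unit factor. Verifying coefficient by coefficient that these conventions compensate so that the matrix formula agrees with the plain Euclidean It\^{o} formula is the delicate and easily error-prone step; the stochastic-analytic content, once the identification has been set up, is entirely standard.
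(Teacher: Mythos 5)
Your proposal follows essentially the same route as the paper, whose entire ``proof'' of Theorem \ref{ito1} is the remark that it can be proved similarly to Proposition $3.10.10$ of Bichteler \cite{Bi01} for the multivariate case --- i.e., precisely the reduction via the identification (\ref{id}) and the multivariate It\^{o} formula that you invoke, followed by the notational translation into (\ref{DM}) and (\ref{OSD}). Your added observations --- that $f$ must in fact be twice continuously differentiable for the second-order and jump terms to make sense (the statement's ``continuously differentiable'' is a slip, compare Propositions \ref{ito2} and \ref{ito3}), and that left limits remain in the compact convex sets $D_{n}$, which yields $X_{t-}\in G$ --- are correct refinements of points the paper leaves implicit.
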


The following two propositions for Hermitian L\'{e}vy processes are special
cases of the above result. The first one is the analog of the multivariate
case in \cite[Prop. 8.15]{CT}.

\begin{proposition}
[The It\^{o} formula for matrix L\'{e}vy processes]\label{ito2} Let $\left\{
X_{t}:t\geq0\right\}  $ be a Hermitian L\'{e}vy process with triplet $\left(
\mathcal{A},\nu,\Psi\right)  $, $G\subset\mathbb{H}_{d}$ an open subset, and
$f:G\rightarrow\mathbb{R}$ twice continuously differentiable. If $\left\{
X_{t}:t\geq0\right\}  $ is locally bounded within $G$, then $X_{t-}$ belongs
to $G$ for all $t>0,$ the process $f\left(  X_{t}\right)  $ is a
semimartingale, and%
\begin{equation}
f\left(  X_{t}\right)  =f\left(  X_{0}\right)  +\mathrm{tr}\left(
\int\nolimits_{0}^{t}Df\left(  X_{s-}\right)  dX_{s}\right)  +\frac{1}{2}%
\int\nolimits_{0}^{t}\sum\limits_{\left(  i,j\right)  \left(  k,h\right)
}A_{ij,kh}\frac{\partial^{2}}{\partial z_{ij}z_{kh}}f\left(  X_{s}\right)  ds
\label{iflp}%
\end{equation}%
\[
+\int\nolimits_{\left(  0,t\right]  \times\mathbb{H}_{d}^{0}}\left[  f\left(
X_{s-}+y\right)  -f\left(  X_{s-}\right)  -\mathrm{tr}\left(  Df\left(
X_{s-}\right)  y\right)  \right]  J_{X}\left(  ds,dy\right)  .
\]

\end{proposition}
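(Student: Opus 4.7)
The plan is to derive Proposition \ref{ito2} as a direct specialization of Theorem \ref{ito1} by computing the two pieces that simplify in the L\'{e}vy setting: the continuous quadratic covariations $[z_{ij}^{c},z_{kh}^{c}]$ and the jump integrand. The hypotheses of Theorem \ref{ito1} are in force because twice continuous differentiability is stronger than continuous differentiability and because $\{X_{t}\}$ is locally bounded within $G$ by assumption; so I immediately obtain that $X_{t-}\in G$ for $t>0$, that $f(X_{t})$ is a semimartingale, and the general formula. The task is to rewrite its last two summands.

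For the quadratic variation term, I would use the L\'{e}vy--It\^{o} decomposition (\ref{LID}). The drift $t\Psi$ has bounded variation and both jump integrals are purely discontinuous, so the continuous local-martingale part of each coordinate of $X$ equals the corresponding coordinate of $B_{\mathcal{A}}$. Translating through the identification (\ref{id}) between $\mathbb{H}_{d}$ and $\mathbb{R}^{d^{2}}$, and taking $\Theta_{1},\Theta_{2}$ to be the elementary Hermitian matrices dual to the coordinates $z_{ij},z_{kh}$ in (\ref{id}), the Gaussian covariance formula
\[
\mathbb{E}\{\mathrm{tr}(\Theta_{1}B_{\mathcal{A}}(s))\,\mathrm{tr}(\Theta_{2}B_{\mathcal{A}}(t))\}=\min(s,t)\,\mathrm{tr}(\Theta_{1}\mathcal{A}\Theta_{2})
\]
together with the polarization identity yields $[z_{ij}^{c},z_{kh}^{c}]_{s}=A_{ij,kh}\,s$, hence $d[z_{ij}^{c},z_{kh}^{c}]_{s}=A_{ij,kh}\,ds$. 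Plugging this into the corresponding term of Theorem \ref{ito1} produces the Gaussian term in (\ref{iflp}).

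For the jump term, at each jump time $s$ one has $X_{s}=X_{s-}+\Delta X_{s}$, and the Poisson random measure $J_{X}$ records the pair $(s,\Delta X_{s})$; so inside the integral with respect to $J_{X}(ds,dy)$ one can replace $f(X_{s})-f(X_{s-})$ by $f(X_{s-}+y)-f(X_{s-})$, which is precisely the jump integrand in (\ref{iflp}). This rewriting is pointwise on the atoms of $J_{X}$ and does not require any additional integrability.

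The main obstacle is the bookkeeping in the covariation identification: one must carefully track the normalizations coming from (\ref{id}) and the symmetric summation conventions (\ref{OFD})--(\ref{OSD}) so that the factors of $2$ accumulated for off-diagonal real and imaginary coordinates match the matrix representation $\mathcal{A}=(A_{ij,kh})$ of the covariance operator in the chosen basis. Once that accounting is done consistently for both the test matrices $\Theta_{1},\Theta_{2}$ and the coordinate vector fields $\partial/\partial z_{ij}$, the identity $d[z^{c}_{ij},z^{c}_{kh}]_{s}=A_{ij,kh}\,ds$ holds on the nose and Proposition \ref{ito2} follows from Theorem \ref{ito1}.
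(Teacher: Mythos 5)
Your proposal is correct and follows exactly the route the paper intends: the paper states Proposition \ref{ito2} as a special case of Theorem \ref{ito1} (in analogy with the multivariate case of Cont--Tankov, Prop.~8.15) without writing out the details, and your specialization --- identifying $[z_{ij}^{c},z_{kh}^{c}]_{s}=A_{ij,kh}\,s$ via the L\'{e}vy--It\^{o} decomposition and the Gaussian covariance of $B_{\mathcal{A}}$, and rewriting $f(X_{s})=f(X_{s-}+y)$ on the atoms of $J_{X}$ --- is precisely the missing computation. Your explicit flagging of the normalization bookkeeping in the identification (\ref{id}) is a fair caveat, and the argument is sound.
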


The next martingale-drift decomposition of functions of a Hermitian L\'{e}vy
proces $X$ can be obtained by replacing the L\'{e}vy-It\^{o} decomposition
(\ref{LID}) for $X$ into the stochastic integral part of (\ref{iflp}).

\begin{proposition}
\label{ito3} Let $\left\{  X_{t}:t\geq0\right\}  $ be a Hermitian L\'{e}vy
process with triplet $\left(  \mathcal{A},\nu,\Psi\right)  $, $G\subset
\mathbb{H}_{d}$ an open subset, and $f:G\rightarrow\mathbb{R}$ twice
continuously differentiable. If $\left\{  X_{t}:t\geq0\right\}  $ is locally
bounded within $G$, then $X_{t-}$ belongs to $G$ for all $t>0,$ and the
process $f\left(  X_{t}\right)  $ can be decomposed as $f\left(  X_{t}\right)
=M_{t}+V_{t}$ where $M_{t}$ is a martingale and $V_{t}$ is a bounded variation
process such that%
\[
M_{t}=f\left(  X_{0}\right)  +\mathrm{tr}\left(  \int\nolimits_{0}%
^{t}Df\left(  X_{s-}\right)  B_{\mathcal{A}}(ds)\right)  +\int%
\nolimits_{\left(  0,t\right]  \times\mathbb{H}_{d}^{0}}\left[  f\left(
X_{s-}+y\right)  -f\left(  X_{s-}\right)  \right]  \widetilde{J}_{X}\left(
ds,dy\right)
\]
and%
\[
V_{t}=\frac{1}{2}\int\nolimits_{0}^{t}\sum\limits_{\left(  i,j\right)  \left(
k,h\right)  }A_{ij,kh}\frac{\partial^{2}}{\partial z_{ij}z_{kh}}f\left(
X_{s}\right)  ds+\mathrm{tr}\left(  \int\nolimits_{0}^{t}\Psi Df\left(
X_{s}\right)  ds\right)  ~~~~~~~~~~~~~~~~~~~~~~~~~~~
\]%
\[
~~~~~~~~~~~~~~~~~~~~~~~~~+\int\nolimits_{\left(  0,t\right]  \times
\mathbb{H}_{d}^{0}}\left[  f\left(  X_{s-}+y\right)  -f\left(  X_{s-}\right)
-\mathrm{tr}\left(  Df\left(  X_{s-}\right)  y\right)  1_{\{ \left\Vert
y\right\Vert \leq1\}}\right]  ds\nu(dy).
\]

\end{proposition}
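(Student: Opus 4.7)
The plan is to take the It\^{o} formula in Proposition \ref{ito2} as the starting point and substitute the L\'{e}vy--It\^{o} decomposition (\ref{LID}) of $X$ into the stochastic integral term $\mathrm{tr}(\int_{0}^{t}Df(X_{s-})\,dX_{s})$; the rest of the proof is a rearrangement that gathers the martingale contributions into $M_{t}$ and the finite variation contributions into $V_{t}$.

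First I would expand the integrator $dX_{s}$ into its four components from (\ref{LID}). The drift piece produces $\mathrm{tr}(\int_{0}^{t}\Psi Df(X_{s})\,ds)$, which is continuous and of bounded variation and so belongs in $V_{t}$. The Brownian piece produces the genuine martingale $\mathrm{tr}(\int_{0}^{t}Df(X_{s-})\,B_{\mathcal{A}}(ds))$. The small-jump compensated piece $\int_{\|y\|\le 1}\mathrm{tr}(Df(X_{s-})y)\,\widetilde{J}_{X}(ds,dy)$ is already a martingale. The large-jump piece $\int_{\|y\|>1}\mathrm{tr}(Df(X_{s-})y)\,J_{X}(ds,dy)$ is not yet in the form we want; I would merge it into the pure-jump term of (\ref{iflp}), whose integrand is $f(X_{s-}+y)-f(X_{s-})-\mathrm{tr}(Df(X_{s-})y)$. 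The merger replaces the subtracted linear correction by $\mathrm{tr}(Df(X_{s-})y)\,\mathbf{1}_{\{\|y\|\le 1\}}$, which is precisely the truncation already used in the L\'{e}vy--Khintchine representation.

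Next, I would compensate the combined jump integral using $J_{X}=\widetilde{J}_{X}+ds\,\nu(dy)$, splitting it into
\[
\int_{(0,t]\times\mathbb{H}_{d}^{0}}\bigl[f(X_{s-}+y)-f(X_{s-})\bigr]\widetilde{J}_{X}(ds,dy)
\]
plus the $ds\,\nu(dy)$ term appearing in $V_{t}$. The key verification is that the compensated integral is a true martingale: near $0$ the integrand is $O(\|y\|^{2})$ by Taylor expansion of $f$ at $X_{s-}$, which matches the $\|y\|^{2}$ growth controlled by $\nu$; away from $0$, only finitely many jumps occur in any bounded interval, so the large-jump portion is a finite sum of terms that can be written as a compensated measure integral plus a $\nu$-drift as long as $f(X_{s-}+y)-f(X_{s-})$ is $\nu$-integrable on $\{\|y\|>1\}$. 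This integrability is ensured by the hypothesis that $X$ is locally bounded within $G$: along each stopping interval $[0,T_{n})$ both $X_{s-}$ and $X_{s-}+y=X_{s}$ stay in the compact set $D_{n}\subset G$, so $f$ and $Df$ are bounded on the trajectory, and the result therefore holds locally; standard localization then delivers the decomposition up to any time $t$.

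The main obstacle is precisely this bookkeeping of the large jumps: the integrand $f(X_{s-}+y)-f(X_{s-})$ has no a priori decay in $y$, so its raw compensation by $ds\,\nu(dy)$ is not legitimate for a general L\'{e}vy measure. The device that makes the argument work is to keep the indicator $\mathbf{1}_{\{\|y\|\le 1\}}$ inside the drift term $V_{t}$ (so that the $\nu$-integrand is $O(\|y\|^{2})$ for small $y$ and bounded for large $y$ on the trajectory), while absorbing the uncompensated large-jump linear correction into the compensated martingale integral. Once this regrouping is made, identifying $M_{t}$ and $V_{t}$ is immediate, and the martingale property of $M_{t}$ follows from the independence of $B_{\mathcal{A}}$ and $J_{X}$ together with the standard isometry for compensated Poisson integrals.
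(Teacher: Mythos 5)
Your proposal is correct and takes essentially the same route as the paper: the paper obtains Proposition \ref{ito3} precisely by substituting the L\'{e}vy--It\^{o} decomposition (\ref{LID}) into the stochastic integral term of the It\^{o} formula (\ref{iflp}) of Proposition \ref{ito2} and regrouping the martingale and bounded variation contributions, which is exactly your plan. Your extra bookkeeping of the large-jump linear correction (absorbing it into the jump integrand so the indicator $1_{\{\left\Vert y\right\Vert \leq1\}}$ lands in $V_{t}$) and the integrability check via Taylor expansion and local boundedness are details the paper leaves implicit, but they are the same argument spelled out.
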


\begin{remark}
\label{ito4} If a Hermitian L\'{e}vy process $X$ is invariant under unitary
conjugation with triplet $\left(  \sigma^{2}\mathrm{I}_{d^{2}},\nu
,\psi\mathrm{I}_{d}\right)  $, where $\nu$ has the polar decomposition
$(\pi,\rho)$, and $\pi$ is the uniform measure on $\mathbb{S}_{d}$, then the
It\^{o} formula for $X$ in Proposition \ref{ito3} becomes in
\begin{align*}
M_{t}  &  =f\left(  X_{0}\right)  +\mathrm{tr}\left(  \int\nolimits_{0}%
^{t}Df\left(  X_{s-}\right)  B_{\mathcal{A}}(ds)\right) \\
&  ~~~~~~~~~~~~\int\nolimits_{\left(  0,t\right]  \times\mathbb{S}_{d}%
\times(0,\infty)}\left[  f\left(  X_{s-}+ru\right)  -f\left(  X_{s-}\right)
\right]  \left(  J_{X}\left(  ds,du,dr\right)  -ds\pi(du)\rho(dr)\right)
\end{align*}
and%
\begin{align*}
V_{t}  &  =\frac{\sigma^{2}}{2}\int\nolimits_{0}^{t}\sum\limits_{\left(
i,j\right)  \left(  k,h\right)  }\frac{\partial^{2}}{\partial z_{ij}z_{kh}%
}f\left(  X_{s}\right)  ds+\psi\mathrm{tr}\left(  \int\nolimits_{0}%
^{t}Df\left(  X_{s}\right)  ds\right) \\
&  +\int\nolimits_{\left(  0,t\right]  \times\mathbb{S}_{d}\times(0,\infty
)}\left[  f\left(  X_{s-}+ru\right)  -f\left(  X_{s-}\right)  -r\mathrm{tr}%
\left(  Df\left(  X_{s-}\right)  u\right)  1_{\{r\leq1\}}\right]
ds\pi(du)\rho(dr).
\end{align*}

\end{remark}

\section{Simultaneous jumps of a Hermitian L\'{e}vy process and its
eigenvalues}

Let $\left\{  X(t):t\geq0\right\}  $ be a $d\times d$ Hermitian L\'{e}vy
process and let $\lambda(t)=(\lambda_{1}(t),...,\lambda_{d}(t))$ be the vector
of eigenvalues of $X(t)$ where $\lambda_{1}(t)\geq\lambda_{2}(t)\geq\cdots
\geq\lambda_{d}(t)$ for each $t\geq0$. If $\lambda$ jumps at time $t,$ it is
clear that $\Delta X(t)\neq0$, since the $d$-tuple of ordered eigenvalues is a
continuous function on the space of Hermitian matrices. This follows from the
Hoffman-Wielandt inequality (see \cite[Rem. $2.1.20$]{AGZ09}) where for each
$t\geq0$%
\begin{equation}
\left\Vert \Delta\lambda(t)\right\Vert _{2}\leq\left\Vert \Delta
X(t)\right\Vert , \label{WH}%
\end{equation}
where $\left\Vert \text{\quad}\right\Vert _{2}$ denotes the Euclidean norm in
$\mathbb{R}^{d}$.

A natural question is whether the jump of the process $X$ at time $t$ implies
the jump of the eigenvalues at that time. Our goal in this section is to
provide conditions for the simultaneous jumps of the eigenvalue process
$\lambda$ whenever $X$ jumps.

We analyze the non zero jumps of $X$ in two situations, when $X(t)$ and
$X(t-)$ commute and when they do not commute. In the commutative case we find
that exactly $k$ eigenvalues jump when $X$ has a jump of rank $k$ at time $t$,
and that all the eigenvalues would jump simultaneously if and only if the jump
of $X$ at time $t$ is of rank $d.$ In the noncommutative case, we prove that
all the eigenvalues jump simultaneously if the jump of $X$ at time $t$ is of
rank one.

We present below natural examples of both, commutative and noncommutative situations.

\begin{example}
\label{ex1}Let $\left\{  (\lambda_{1}(t),...,\lambda_{d}(t)):t\geq0\right\}  $
be a $d$-dimensional L\'{e}vy process where $\{ \lambda_{i}(t):t\geq0\}$,
$i=1,...,d,$ are independent one-dimensional L\'{e}vy processes. For an
arbitrary non random unitary $d\times d$ matrix $U$, define the $d\times d$
Hermitian L\'{e}vy process
\[
X(t)=Udiag(\lambda_{1}(t),...,\lambda_{d}(t))U^{\ast},\quad t\geq0.
\]
Since independent L\'{e}vy processes cannot jump at the same time almost
surely, $X$ is an Hermitian L\'{e}vy process whose jumps are of rank one
almost surely and we observe that in that case $X(t)$ and $X(t-)$ are
simultaneously diagonalizable.
\end{example}

\begin{example}
\label{ex2}Let us consider a compound Poisson $d\times d$ matrix process%
\[
X(t)=\sum\limits_{j=1}^{N(t)}u_{j}u_{j}^{\ast},\quad t\geq0,
\]
where $u_{j},j\geq1$ are i.i.d. random vectors uniformly distributed on the
unit sphere of $\mathbb{C}^{d}$ and $N(t),t\geq0,$ is a Poisson process
independent of $u_{j},j\geq1$. Since $\bigtriangleup X(t)=u_{N(t)}%
u_{N(t)}^{\ast}1_{\left\{  \bigtriangleup X(t)=1\right\}  }$ the process $X$
has jumps of rank one almost surely and $X(t)=X(t-)+u_{N(t)}u_{N(t)}^{\ast}$
whenever there is a jump at time $t$. Hence we observe that $X(t)$ and $X(t-)$
do not commute since $u_{N(t)}u_{N(t)}^{\ast}$ and $X(t-)=u_{1}u_{1}^{\ast
}+\cdots+u_{N(t)-1}u_{N(t)-1}^{\ast}$ do not commute.
\end{example}

\begin{example}
\label{ex3}Let us consider $\left\{  X(t):t\geq0\right\}  $ a Hermitian
L\'{e}vy process in $\mathbb{H}_{d}^{+}$ the set\ of nonnegative definite
$d\times d$ Hermitian matrices. Assume that $X$ has jumps of rank one. Then by
Theorem $4.1$ in \cite{DPARA} $X$ is the quadratic variation of a
$\mathbb{C}^{d}$-valued L\'{e}vy process $\left\{  Y(t):t\geq0\right\}  $,
that is
\[
X(t)=\left[  Y\right]  (t),\quad t\geq0\text{.}%
\]
For each $t>0$, $X(t)=\sum\limits_{0<u\leq t}\bigtriangleup Y_{u}\left(
\bigtriangleup Y_{u}\right)  ^{\ast}$ and $X(t-)=\lim\limits_{s\nearrow t}%
\sum\limits_{0<r\leq s}\bigtriangleup Y_{r}\left(  \bigtriangleup
Y_{r}\right)  ^{\ast}$ do not commute in general. In particular, $X(t)$ and
$X(t-)$ commute if for each $u,r\in(0,t]$ the term $\bigtriangleup
Y_{u}\left(  \bigtriangleup Y_{u}\right)  ^{\ast}\bigtriangleup Y_{r}\left(
\bigtriangleup Y_{r}\right)  ^{\ast}$ is Hermitian.
\end{example}

\begin{example}
\label{ex4}Let $\left\{  Z(t):t\geq0\right\}  $ be a Hermitian L\'{e}vy
process in $\mathbb{H}_{d}$ of bounded variation with jumps of rank one. By
Theorem $4.2$ in \cite{DPARA} $Z$ is the difference of the quadratic
variations of two $\mathbb{C}^{d}$-valued L\'{e}vy processes $\left\{
X(t):t\geq0\right\}  $ and $\left\{  Y(t):t\geq0\right\}  $,%
\[
Z(t)=\left[  X\right]  (t)-\left[  Y\right]  (t),\quad t\geq0,
\]
where $\left[  X\right]  (t),t\geq0\ $and$\ \left[  Y\right]  (t),t\geq0$ are
independent L\'{e}vy processes in $\mathbb{H}_{d}^{+}$. It follows from
Example \ref{ex3} that $Z(t)=\left[  X\right]  (t)-\left[  Y\right]  (t)$ and
$Z(t-)=\left[  X\right]  (t-)-\left[  Y\right]  (t-)$ do not commute in
general. In particular, $Z(t)$ and $Z(t-)$ commute if for each $u,r\in(0,t]$
the terms $\bigtriangleup X_{u}\left(  \bigtriangleup X_{u}\right)  ^{\ast
}\bigtriangleup X_{r}\left(  X_{r}\right)  ^{\ast},$ $\bigtriangleup
Y_{u}\left(  \bigtriangleup Y_{u}\right)  ^{\ast}\bigtriangleup Y_{r}\left(
\bigtriangleup Y_{r}\right)  ^{\ast}$ and $\bigtriangleup X_{u}\left(
\bigtriangleup X_{u}\right)  ^{\ast}\bigtriangleup Y_{r}\left(  \bigtriangleup
Y_{r}\right)  ^{\ast}$ are Hermitian.
\end{example}

\subsection{Commutative case}

Assume that $X(t)$ and $X(t-)$ commute. Then there exists a random unitary
matrix $U=U_{t}$ such that $X(t)=Udiag(\lambda_{1}(t),\allowbreak
...,\allowbreak\lambda_{d}(t))\allowbreak U^{\ast}$ and $X(t-)=U\allowbreak
diag\allowbreak(\lambda_{1}(t-),\allowbreak...,\allowbreak\lambda
_{d}(t-))\allowbreak U^{\ast}$. Observe that the eigenvalues of
$\bigtriangleup X(t)=X(t)-X(t-)$ are the difference between the eigenvalues of
$X(t)$ and the eigenvalues of $X(t-)$.

If $\bigtriangleup X(t)$ is of rank one, there is exactly one eigenvalue
$\lambda_{i}$ such that $\bigtriangleup X(t)=Udiag(0,...,\bigtriangleup
\lambda_{i}(t),...,0)U^{\ast}$ and all the other eigenvalues do not jump at
that time $t$. Similarly if $\bigtriangleup X(t)$ is of rank $k$ where $1\leq
k\leq d$ there are exactly $k$ eigenvalues that jump at time $t$ and all other
$d-k$ eigenvalues remain without jumping at $t$. Observe that the only case
when all eigenvalues jump simultaneously at $t$ is when the jump
$\bigtriangleup X(t)\neq0$ is of rank $d$.

\subsection{Noncommutative case}

When $X(t)$ and $X(t-)$ do not commute the situation is completely different
from the commutative case, since the eigenvalues of $X(t)-X(t-)$ are not any
more the difference between the eigenvalues of $X(t)$ and the eigenvalues of
$X(t-)$. In this case we prove that if the jump $\bigtriangleup X(t)$ is of
rank one, all eigenvalues jump simultaneously at this time $t$, as we will see
in Theorem \ref{sim}.

We first consider some facts about the eigenvalues of rank one matrix
perturbations. Let $A$ be a $d\times d$ complex matrix. A rank one
perturbation of $A$ is a $d\times d$ complex matrix $A+ruv^{\top}$ where
$u,v\in\mathbb{C}^{d}$ and $r\in\mathbb{C}$. If $S$ is any set of polynomials
in $\mathbb{C}[x_{1},...,x_{d}]$, we define $V(S)=\left\{  u\in\mathbb{C}%
^{d}:f(u)=0\text{ for all }f\in S\right\}  $. If $S$ is a finite set of
polynomials $f_{1},...,f_{n}$ we write $V(f_{1},...,f_{n})$ for $V(S)$. Recall
that a subset $W$ of $\mathbb{C}^{d}$ is an algebraic set if $W=V(f_{1}%
,...,f_{n})$ for some $f_{1},...,f_{n}$. A non empty subset $Q\subset
\mathbb{C}^{d}$ is generic if the complement $\mathbb{C}^{d}\backslash Q\ $is
contained in an algebraic set $W$ which is not $\mathbb{C}^{d}$. In such a
case, $\mathbb{C}^{d}\backslash Q$ is nowhere dense and of $2d$ dimensional
Lebesgue measure zero.

When a Hermitian L\'{e}vy process $X$ has simple spectrum for each $t>0$, then
$X_{t}$ and $X_{t-}$ have no common eigenvalues when the jump at time $t$ is
of rank one.

\begin{proposition}
\label{eig}Let $\{X_{t}:t\geq0\}$ be an absolutely continuous $d\times d$
Hermitian L\'{e}vy process. Let $\lambda(t)=(\lambda_{1}(t),...,\lambda
_{d}(t))$ be the vector of eigenvalues of $X_{t}$ for each $t\geq0$. If $X$
has a rank one jump at time $t>0$, then%
\[
\left\{  \lambda_{1}(X_{t}),\ldots,\lambda_{d}(X_{t})\right\}  \cap\left\{
\lambda_{1}(X_{t-}),\ldots,\lambda_{d}(X_{t-})\right\}  =\varnothing\text{
a.s.}%
\]

\end{proposition}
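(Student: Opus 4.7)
A rank-one Hermitian matrix can be written uniquely (up to sign) as $\Delta X(t)=r\,uu^{\ast}$ with $r\in\mathbb{R}\setminus\{0\}$ and $u\in\mathbb{C}^{d}$ a unit vector, so the plan is to express the event ``$X_{t}$ and $X_{t-}$ share an eigenvalue'' as a single algebraic condition on the pair $(X_{t-},u)$ and then to integrate this condition against the jump compensator of $X$.

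First, I would invoke the classical secular equation for a rank-one Hermitian perturbation: if $X_{t-}$ has simple spectrum with orthonormal eigenbasis $\{e_{1},\ldots,e_{d}\}$ and eigenvalues $\mu_{1},\ldots,\mu_{d}$, then $\mu_{i}$ remains an eigenvalue of $X_{t}=X_{t-}+ruu^{\ast}$ precisely when $\langle u,e_{i}\rangle=0$, while every other eigenvalue of $X_{t}$ solves $1+r\sum_{j}|\langle u,e_{j}\rangle|^{2}/(\mu_{j}-\lambda)=0$ and therefore strictly interlaces, hence avoids, $\sigma(X_{t-})$. Absolute continuity of $X$ together with Proposition~\ref{abscont} and the identity $X_{s-}=X_{s}$ a.s.\ at every fixed $s$ ensure that $X_{t-}$ has simple spectrum almost surely, so this characterization is legitimate. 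Consequently, on the event of a rank-one jump at $t$,
\[
\sigma(X_{t})\cap\sigma(X_{t-})\neq\varnothing\quad\Longleftrightarrow\quad X_{t-}\in N_{u},
\]
where $N_{u}:=\{A\in\mathbb{H}_{d}:\det[\,u,\,Au,\,A^{2}u,\,\ldots,\,A^{d-1}u\,]=0\}$ is the set of Hermitian matrices for which $u$ fails to be a cyclic vector.

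Second, for each fixed unit vector $u\in\mathbb{C}^{d}$, the set $N_{u}$ is a proper real-algebraic subvariety of $\mathbb{H}_{d}\cong\mathbb{R}^{d^{2}}$: taking $A=\mathrm{diag}(1,2,\ldots,d)$ and $u$ with all nonzero coordinates in that basis, the Krylov determinant factors as a Vandermonde in the $\mu_{i}$ times $\prod_{i}\langle u,e_{i}\rangle$ and is nonzero, so $N_{u}\neq\mathbb{H}_{d}$. Hence $N_{u}$ has Lebesgue measure zero in $\mathbb{R}^{d^{2}}$, and the absolute continuity of the law of $X_{s-}$ yields $\Pr(X_{s-}\in N_{u})=0$ for every $s\geq 0$ and every unit vector $u$.

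Finally, I would apply the compensation formula for the Poisson random measure $J_{X}$ of intensity $ds\otimes\nu$ to the nonnegative predictable integrand $f(s,y,A):=\mathbf{1}_{\{\sigma(A+y)\cap\sigma(A)\neq\varnothing\}}$. Writing each $y\in\mathbb{H}_{d}^{1}$ uniquely (up to sign) as $y=r(y)\,u(y)u(y)^{\ast}$, one obtains for every $T>0$
\[
\mathbb{E}\sum_{\substack{0<s\leq T\\ \Delta X_{s}\in\mathbb{H}_{d}^{1}}} f(s,\Delta X_{s},X_{s-})\;=\;\int_{0}^{T}\!\!\int_{\mathbb{H}_{d}^{1}}\Pr\!\left(X_{s-}\in N_{u(y)}\right)\nu(dy)\,ds\;=\;0,
\]
so the count of rank-one jumps of $X$ producing a common eigenvalue vanishes almost surely, which is the desired conclusion. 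The chief obstacle is that $u$ is itself random and may be correlated with $X_{t-}$, which blocks any direct ``fix $u$, vary $A$'' Lebesgue-measure argument on a single sample $\omega$; the compensation formula circumvents this by disintegrating the random sum over jump times into a deterministic $\nu$-integral in which $u(y)$ is fixed, at which point the measure-zero property of $N_{u}$ together with absolute continuity of $X_{s-}$ closes the argument.
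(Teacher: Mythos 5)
Your argument is correct, but it takes a genuinely different route from the paper's. The paper works pointwise at the jump time and perturbs in the \emph{opposite} direction: it writes $X_{t-}$ as a rank-one perturbation of $X_{t}$, uses absolute continuity to give $X_{t}$ a simple spectrum (hence minimal polynomial of degree $d$) almost surely, and then invokes Proposition $2.2$(iii) of \cite{RaWo11} applied to $z=(u,\overline{u})$ to conclude that no eigenvalue of $X_{t-}$ is an eigenvalue of $X_{t}$; perturbing $X_{t}$ rather than $X_{t-}$ is exactly how the paper sidesteps the fact that $X_{t-}$ at a jump time need not inherit the fixed-time law. You instead perturb $X_{t-}$, characterize a shared eigenvalue via the secular equation as the event that $u$ is orthogonal to some eigenvector of $X_{t-}$, i.e.\ that $u$ fails to be cyclic ($X_{t-}\in N_{u}$), and then kill this event by integrating against the compensator $ds\otimes\nu$ of $J_{X}$. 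Your route buys rigor precisely where the paper is terse: the generic set in \cite{RaWo11} depends on the unperturbed matrix and the jump direction $u$ is correlated with $X_{t}$, so the bare condition $z\neq 0$ does not by itself exclude, say, $u$ orthogonal to an eigenvector of $X_{t}$ (which would produce a common eigenvalue), and the almost-sure spectral statements must somehow be transferred from fixed times to random jump times; the compensation formula settles both issues at once, since in the compensated integral $s$ and $y$ are deterministic and only the fixed-time law of $X_{s-}=X_{s}$ enters, making your proof self-contained with no appeal to rank-one perturbation theory beyond the secular equation. What the paper's approach buys is brevity: a one-step citation in place of your predictability and disintegration bookkeeping. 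Two small repairs to your write-up, neither a real gap: first, your claim that $X_{t-}$ has a simple spectrum a.s.\ at the \emph{random} jump time $t$ is unjustified as stated (it is exactly the obstacle you yourself identify), but it is also unnecessary, because the inclusion $\left\{ \sigma(A)\cap\sigma(A+ruu^{\ast})\neq\varnothing\right\} \subset N_{u}$ holds for \emph{every} Hermitian $A$ --- a Hermitian matrix with a repeated eigenvalue is derogatory and admits no cyclic vector --- so the compensator computation can be run with $N_{u}$ directly; second, in showing $N_{u}\neq\mathbb{H}_{d}$ you must fix $u$ first, so the witness should be $A=\sum_{i}i\,f_{i}f_{i}^{\ast}$ for an orthonormal basis $(f_{i})$ chosen with $\langle u,f_{i}\rangle\neq0$ for all $i$, rather than choosing $u$ adapted to $\mathrm{diag}(1,\ldots,d)$.
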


\begin{proof}
Observe that $Q=\left\{  z\in\mathbb{C}^{2d}:tr\left(  zz^{\ast}\right)
\neq0\right\}  $ is a generic set since $\{z\in\mathbb{C}^{2d}:\allowbreak
tr\left(  zz^{\ast}\right)  =0\}=\left\{  0\right\}  $ is an algebraic set
which is equal to $V(f_{1},...,f_{2d})$ where $f_{i}(z)=z_{i}$ for any
$z=\left(  z_{1},...,z_{2d}\right)  ^{\top}\in\mathbb{C}^{d}$ and $i=1,...,2d$.

Since $X$ has a jump of rank one at time $t$, $\bigtriangleup X_{t}=ruu^{\ast
}$ with $r=r_{t}\in\mathbb{R}\backslash\{0\}$ and $u=u_{t}\in\mathbb{C}%
^{d}\backslash\{0\}.$ That is, we have the rank one perturbation $X_{t-}%
=X_{t}+ruu^{\ast}$. Since $X_{t}$ is absolutely continuous and Hermitian, the
eigenvalues $\lambda_{1}(X_{t}),\cdots,\lambda_{d}(X_{t})$ do not collide
almost surely, hence the degree of the minimal polynomial of $X_{t}$ is $d$.
By Proposition $2.2$ (iii) of \cite{RaWo11} applied to $z=(u,\overline{u})\in
Q$, there are exactly $d$ eigenvalues of $X_{t-}$ that are not eigenvalues of
$X_{t}$.
\end{proof}

\begin{remark}
Proposition $2.2$ in \cite{RaWo11} also gives insight into the behavior of the
non-Hermitian rank one jumps of a general complex matrix L\'{e}vy process for
which $X_{t}$ no longer has a simple spectrum. In this case, $\bigtriangleup
X_{t}=ruv^{\top}$ with $r\in\mathbb{C}\backslash\{0\}$ and $u,v\in
\mathbb{C}^{d}\backslash\{0\},$ the number $m_{t}\leq d$ of distinct
eigenvalues of $X_{t}$ might be random and there are exactly $m_{t}$
eigenvalues of $X_{t-}$ that are not eigenvalues of $X_{t}$.
\end{remark}

We now present the main result of this paper, giving conditions for the
simultaneous jumps of a Hermitian L\'{e}vy process and its eigenvalue process.
Moreover, if one eigenvalue component of the eigenvalue process jumps, then
all the other eigenvalue components jump.

\begin{theorem}
\label{sim}Let $\left\{  X(t):t\geq0\right\}  $ be an absolutely continuous
$d\times d$ Hermitian L\'{e}vy process and let $\lambda(t)=(\lambda
_{1}(t),...,\lambda_{d}(t))$ be the vector of eigenvalues of $X(t)$ where
$\lambda_{1}(t)\geq\lambda_{2}(t)\geq\cdots\geq\lambda_{d}(t)$ for each
$t\geq0$.

If $X$ has a jump of rank one at $t>0$ and $X(t)$ and $X(t-)$ do not commute
then $\Delta\lambda(t)\neq0$. Moreover $\Delta\lambda_{i}\left(  t\right)
\neq0$ for some $i$ if and only if $\Delta\lambda_{j}\left(  t\right)  \neq0$
for all $j\neq i.$
\end{theorem}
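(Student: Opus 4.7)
The plan is to derive Theorem \ref{sim} directly from Proposition \ref{eig}, which under the stated hypotheses already delivers the stronger conclusion that the unordered spectra of $X(t)$ and $X(t-)$ are \emph{disjoint} almost surely. This set-theoretic disjointness is essentially an ``all eigenvalues jump'' statement in disguise, and unpacking it yields everything Theorem \ref{sim} asks for.

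Concretely, I would proceed as follows. First, on the event that $X$ has a rank-one jump at $t$ with $X(t)$ and $X(t-)$ noncommuting, I apply Proposition \ref{eig} to obtain
\[
\{\lambda_{1}(t),\ldots,\lambda_{d}(t)\}\cap\{\lambda_{1}(t-),\ldots,\lambda_{d}(t-)\}=\varnothing\quad\text{a.s.}
\]
Second, I extract the pointwise consequence: $\lambda_{i}(t)\neq\lambda_{j}(t-)$ for every pair $(i,j)\in\{1,\ldots,d\}^{2}$; in particular, specialising to $i=j$ gives $\Delta\lambda_{i}(t)\neq 0$ for every $i\in\{1,\ldots,d\}$. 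This immediately yields the first assertion $\Delta\lambda(t)\neq 0$ and shows that under the hypotheses all $d$ components of $\lambda$ jump simultaneously at $t$, whence the ``moreover'' biconditional is trivial in both directions (indeed, both sides hold unconditionally once the hypotheses are in force).

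Since the heavy lifting has already been performed in Proposition \ref{eig} via the rank-one perturbation result of \cite{RaWo11}, the remaining challenge is mostly conceptual: the noncommutativity hypothesis is precisely what separates the present statement from the commutative analysis of the previous subsection, where a rank-one jump of $X$ produces only a single eigenvalue jump and the spectra of $X(t)$ and $X(t-)$ must share $d-1$ elements. In the noncommutative regime, the direction $u$ in the rank-one decomposition $\Delta X(t)=r\,uu^{\ast}$ cannot be an eigenvector of $X(t-)$, so the generic behaviour underlying the proof of Proposition \ref{eig} is in force and every eigenvalue is displaced. I would emphasise this dichotomy as the real content of the theorem; once it is recognised, the algebraic deduction reduces to the one-line argument above.
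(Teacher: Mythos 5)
Your proof is correct and takes essentially the same route as the paper: both rest entirely on Proposition \ref{eig}, reading off from the a.s.\ disjointness of the spectra of $X(t)$ and $X(t-)$ that $\Delta\lambda_{i}(t)\neq 0$ for every $i$, which gives the first claim and makes the ``moreover'' biconditional hold trivially. The only cosmetic difference is that the paper routes the ``some $\lambda_{i}$ jumps $\Rightarrow$ all jump'' direction through the Hoffman--Wielandt bound (\ref{WH}) to recover $\Delta X(t)\neq 0$ before reapplying the first part, whereas you observe directly that both sides of the equivalence hold unconditionally under the stated hypotheses.
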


\begin{proof}
If the L\'{e}vy process $X$ jumps at time $t$, then, by Proposition \ref{eig},
$\lambda_{i}(X_{t})-\lambda_{j}(X_{t-})\neq0$ for all $i,j=1,2,...,d$ and
therefore $\Delta\lambda(t)\neq0.$

Moreover, if $\lambda$ jumps at time $t$, it is clear from (\ref{WH}) that
$\Delta X(t)\neq0$, and then by the first part of the proof all the
eigenvalues jump, that is $\Delta\lambda_{j}\left(  t\right)  \neq0$ for all
$j\neq i.$
\end{proof}

\begin{remark}
a) We finally observe that Proposition \ref{eig} and Theorem \ref{sim} are
true also for a Hermitian semimartingale $X$ with simple spectrum for each
$t>0$. This is guaranteed whenever $X(t)$ has an absolutely continuous
distribution for each $t>0.$

b) However, we have written the results of this section in the framework of
Hermitian L\'{e}vy processes since the subject is one of Ole's interests.
\end{remark}

\section{The dynamics of the eigenvalues of Hermitian L\'{e}vy processes}

\subsection{Eigenvalue derivatives of Hermitian L\'{e}vy processes}

Consider a Hermitian matrix $X$ with decomposition $X=UDU^{\ast}$ given by
(\ref{decom})\ where $U=\left(  u_{ij}\right)  $ is a\ unitary matrix$,$ with
$u_{ii}>0$ for all $i$, $u_{ij}\neq0$ for $i,j$, and all minors of $U$ having
non-zero determinants and $D$ is a diagonal matrix with entries $\lambda
_{i}=D_{ii}$ such that $\lambda_{1}>\lambda_{2}>\cdots>\lambda_{d}$.

Now if we assume that $X=(x_{ij}+\mathrm{i}y_{ij})_{1\leq i,j\leq d}$ is a
smooth function of a real parameter $\theta$, the Hadamard variation formulae
hold: for each $m=1,2,...,d,$%
\[
\partial_{\theta}\lambda_{m}=\left(  U^{\ast}\partial_{\theta}XU\right)
_{mm},
\]%
\[
\partial_{\theta}^{2}\lambda_{m}=\left(  U^{\ast}\partial_{\theta}%
^{2}XU\right)  _{mm}+2\sum\limits_{j\neq m}\frac{\left\vert \left(  U^{\ast
}\partial_{\theta}XU\right)  _{mj}\right\vert ^{2}}{\lambda_{m}-\lambda_{j}%
}\text{.}%
\]
In particular, for $\theta=x_{kh}$ and $y_{kh}$ with $k\leq h$, we observe
that%
\[
\frac{\partial\lambda_{m}}{\partial x_{kh}}=2\operatorname{Re}(\overline
{u}_{km}u_{hm})1_{\left\{  k\neq h\right\}  }+2\left\vert u_{km}\right\vert
^{2}1_{\left\{  k=h\right\}  ,}%
\]

\[
\frac{\partial\lambda_{m}}{\partial y_{kh}}=2\operatorname{Im}(\overline
{u}_{km}u_{hm})1_{\left\{  k\neq h\right\}  },
\]
\medskip%
\[
\frac{\partial^{2}\lambda_{m}}{\partial x_{kh}^{2}}=2\sum\limits_{j\neq
m}\frac{\left\vert \overline{u}_{km}u_{hj}+\overline{u}_{hm}u_{kj}\right\vert
^{2}}{\lambda_{m}-\lambda_{j}}1_{\left\{  k\neq h\right\}  }+2\sum
\limits_{j\neq m}\frac{\left\vert \overline{u}_{km}u_{kj}\right\vert ^{2}%
}{\lambda_{m}-\lambda_{j}}1_{\left\{  k=h\right\}  ,}%
\]

\[
\frac{\partial^{2}\lambda_{m}}{\partial y_{kh}^{2}}=2\sum\limits_{j\neq
m}\frac{\left\vert \overline{u}_{km}u_{hj}-\overline{u}_{hm}u_{kj}\right\vert
^{2}}{\lambda_{m}-\lambda_{j}}1_{\left\{  k\neq h\right\}  \text{.}}%
\]

We point out that the spectral dynamics of Hermitian matrices is studied in
\cite{Tao11}, giving a Hadamard variation formula of any order for each
eigenvalue in terms of a gradient. It is noted there that an eigenvalue
depends smoothly on $X$ whenever that eigenvalue is simple (even if other
eigenvalues are not).

Using the former derivatives of the eigenvalues and the smooth function given
in (\ref{fi}) we have the following result for Hermitian L\'{e}vy processes
with simple spectrum for each time $t>0.$

\begin{lemma}
\label{Der} Let $\left\{  X_{t}:t\geq0\right\}  $ be a $d\times d$ Hermitian
L\'{e}vy process with absolutely continuous distribution for each $t>0$. Let
$\lambda(t)=(\lambda_{1}(t),...,\lambda_{d}(t))$ with $\lambda_{1}%
(t)>\cdots>\lambda_{d}(t)$ the eigenvalues of $X_{t}$ for each $t\geq0$. For
each $m=1,2,...,d$, there exists $\Phi_{m}:\mathbb{R}^{d^{2}}\rightarrow
\mathbb{R}$ which is $C^{\infty}$ in an open subset $\widetilde{G}%
\subset\mathbb{R}^{d^{2}},$ with $Leb\left(  \mathbb{R}^{d^{2}}\backslash
\widetilde{G}\right)  =0$, such that for each $t>0$, $\lambda_{m}(t)=\Phi
_{m}\left(  X_{t}\right)  $ and for $k\leq h$%
\begin{equation}
\frac{\partial\Phi_{m}}{\partial x_{kh}}=2\operatorname{Re}(\overline{u}%
_{km}u_{hm})1_{\left\{  k\neq h\right\}  }+2\left\vert u_{km}\right\vert
^{2}1_{\left\{  k=h\right\}  ,} \label{fix}%
\end{equation}%
\begin{equation}
\frac{\partial\Phi_{m}}{\partial y_{kh}}=2\operatorname{Im}(\overline{u}%
_{km}u_{hm})1_{\left\{  k\neq h\right\}  }, \label{fiy}%
\end{equation}
and%
\begin{equation}
\frac{\partial^{2}\Phi_{m}}{\partial x_{kh}^{2}}=2\sum\limits_{j\neq m}%
\frac{\left\vert \overline{u}_{km}u_{hj}+\overline{u}_{hm}u_{kj}\right\vert
^{2}}{\lambda_{m}-\lambda_{j}}1_{\left\{  k\neq h\right\}  }+2\sum
\limits_{j\neq m}\frac{\left\vert \overline{u}_{km}u_{kj}\right\vert ^{2}%
}{\lambda_{m}-\lambda_{j}}1_{\left\{  k=h\right\}  ,} \label{fix2}%
\end{equation}%
\begin{equation}
\frac{\partial^{2}\Phi_{m}}{\partial y_{kh}^{2}}=2\sum\limits_{j\neq m}%
\frac{\left\vert \overline{u}_{km}u_{hj}-\overline{u}_{hm}u_{kj}\right\vert
^{2}}{\lambda_{m}-\lambda_{j}}1_{\left\{  k\neq h\right\}  }, \label{fiy2}%
\end{equation}
where for simplicity, the dependence on $t$ is omitted in $\lambda_{m}%
=\lambda_{m}(t)$ and $u_{kl}=u_{kl}(t),$ which are random.
\end{lemma}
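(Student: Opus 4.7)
The plan is to invoke two ingredients already established in Section 2.3: the existence of a smooth eigenvalue map $\Phi:\mathbb{R}^{d^2}\to\mathbb{R}^d$ defined on an open set $\widetilde{G}$ with $\mathbb{R}^{d^2}\setminus\widetilde{G}$ of zero Lebesgue measure (equation (\ref{fi})), together with the Hadamard variation formulas for the eigenvalues of a smoothly parameterized Hermitian matrix, displayed just before the lemma. I will take $\Phi_m$ to be the $m$-th coordinate of $\Phi$. Since $\{X_t:t\ge 0\}$ is assumed to have absolutely continuous distribution at each $t>0$, we have $P(X_t\in\widetilde{G})=1$, so $X_t\in\mathbb{H}_d^{VG}$ and its spectrum is simple almost surely, and the identification $\lambda_m(t)=\Phi_m(X_t)$ holds a.s.\ for each $t>0$.

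To obtain the explicit derivative formulas, I will apply the Hadamard variation formulas to the one-parameter coordinate perturbations coming from the identification (\ref{id}). For $k<h$, Hermiticity forces $\partial_{x_{kh}}X=E_{kh}+E_{hk}$ and $\partial_{y_{kh}}X=\mathrm{i}(E_{kh}-E_{hk})$, while $\partial_{x_{kk}}X=E_{kk}$; in each case the second derivative $\partial_\theta^2 X$ vanishes because $X$ depends linearly on every coordinate. Using the identity $(U^{\ast}E_{ab}U)_{ij}=\overline{u}_{ai}u_{bj}$, the first-order formula $\partial_\theta\lambda_m=(U^{\ast}\partial_\theta X\,U)_{mm}$ directly yields (\ref{fix}) and (\ref{fiy}); substituting $\partial_\theta^2 X=0$ into the second-order formula reduces it to $2\sum_{j\ne m}|(U^{\ast}\partial_\theta X\,U)_{mj}|^2/(\lambda_m-\lambda_j)$, and expanding the squared modulus for the three coordinate choices above produces (\ref{fix2}) and (\ref{fiy2}).

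The main obstacle is essentially bookkeeping: one must keep track of the factors of $2$ that arise whenever an off-diagonal real or imaginary coordinate is perturbed (since Hermitian symmetry couples entries above and below the diagonal), and correctly expand the squared moduli $|\overline{u}_{km}u_{hj}\pm\overline{u}_{hm}u_{kj}|^2$ in terms of real and imaginary parts. The conceptual content, namely the smooth dependence of the simple eigenvalues on the matrix entries on an open dense subset, is already encoded in the construction of $\Phi$ in Section 2.3, and the a.s.\ membership of $X_t$ in $\widetilde{G}$ is immediate from the absolute continuity hypothesis since $\mathbb{R}^{d^2}\setminus\widetilde{G}$ has zero Lebesgue measure.
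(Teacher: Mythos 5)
Your proposal is correct and follows essentially the same route as the paper, which offers no separate proof of Lemma \ref{Der}: the lemma is presented there as an immediate consequence of the smooth eigenvalue map $\Phi$ of (\ref{fi}) from Section 2.3 (with $X_t\in\widetilde{G}$ a.s.\ by absolute continuity) combined with the Hadamard variation formulas displayed just before the lemma, applied to the coordinate perturbations of the identification (\ref{id}) with $\partial_{\theta}^{2}X=0$. Your reconstruction of that implicit argument, including the reduction of the second-order Hadamard formula to the sum over $j\neq m$, is exactly what the authors intend.
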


\subsection{The SDE for the process of eigenvalues}

Using Lemma \ref{Der} and Proposition \ref{ito2}, we can describe the dynamics
of the process of eigenvalues of a Hermitian L\'{e}vy process.

\begin{theorem}
\label{Dynamics} Let $\left\{  X_{t}:t\geq0\right\}  $ be a Hermitian L\'{e}vy
process with triplet $\left(  \mathcal{A},\nu,\Psi\right)  $ and locally
bounded within the open set $G$ in $\mathbb{H}_{d}$ identified with the open
set $\widetilde{G}$ in $\mathbb{R}^{d^{2}}$ of Lemma \ref{Der}. Assume that
$X_{t}$ has an absolutely continuous distribution for each $t>0$. Let
$\lambda(t)=(\lambda_{1}(t),...,\lambda_{d}(t))$ be the eigenvalues of $X_{t}$
for each $t\geq0.$

Then, for each $m=1,\ldots,d$ the process $\lambda_{m}\left(  X_{t}\right)  $
is a semimartingale and%
\begin{align}
\lambda_{m}\left(  X_{t}\right)   &  =\lambda_{m}\left(  X_{0}\right)
+\mathrm{tr}\left(  \int\nolimits_{0}^{t}D\lambda_{m}\left(  X_{s-}\right)
dX_{s}\right)  +\frac{1}{2}\int\nolimits_{0}^{t}\sum\limits_{\left(
i,j\right)  \left(  k,h\right)  }A_{ij,kh}\frac{\partial^{2}}{\partial
z_{ij}z_{kh}}\lambda_{m}\left(  X_{s}\right)  ds\label{dyn}\\
&  +\int\nolimits_{\left(  0,t\right]  \times\mathbb{H}_{d}^{0}}\left[
\lambda_{m}\left(  X_{s-}+y\right)  -\lambda_{m}\left(  X_{s-}\right)
-\mathrm{tr}\left(  D\lambda_{m}\left(  X_{s-}\right)  y\right)  \right]
J_{X}\left(  ds,dy\right)  ,\nonumber
\end{align}
where a)
\[
D\lambda_{m}\left(  X_{s-}\right)  =\left(  \frac{\partial\Phi_{m}%
(s-)}{\partial x_{kh}}+\mathrm{i}\frac{\partial\Phi_{m}(s-)}{\partial y_{kh}%
}1_{\left\{  k\neq h\right\}  }\right)  _{1\leq k\leq h\leq d},
\]%
\begin{equation}
\frac{\partial\Phi_{m}(s-)}{\partial x_{kh}}=2\operatorname{Re}\left(
\overline{u}_{km}(s-)u_{hm}(s-)\right)  1_{\left\{  k\neq h\right\}
}+2\left\vert u_{km}(s-)\right\vert ^{2}1_{\left\{  k=h\right\}  ,} \label{dx}%
\end{equation}%
\begin{equation}
\frac{\partial\Phi_{m}(s-)}{\partial y_{kh}}=2\operatorname{Im}\left(
\overline{u}_{km}(s-)u_{hm}(s-)\right)  1_{\left\{  k\neq h\right\}  }.
\label{dy}%
\end{equation}
b)
\begin{equation}
\sum\limits_{\left(  i,j\right)  \left(  k,h\right)  }A_{ij,kh}\frac
{\partial^{2}}{\partial z_{ij}z_{kh}}\lambda_{m}\left(  X_{s}\right)
=2\sum\limits_{k=1}^{d}A_{kk,kk}\sum\limits_{j\neq m}\frac{\left\vert
u_{km}(s)u_{kj}(s)\right\vert ^{2}}{\lambda_{m}(s)-\lambda_{j}(s)}%
~~~~~~~~~~~~~~~~~~~~~~~~~~~ \label{d2}%
\end{equation}%
\[
~~~~~~~~~~~~~~~~~~~~~~+4\sum\limits_{1\leq k<h\leq d}A_{kh,kh}\sum
\limits_{j\neq m}\frac{\left\vert u_{km}(s)u_{hj}(s)\right\vert ^{2}%
+\left\vert u_{hm}(s)u_{kj}(s)\right\vert ^{2}}{\lambda_{m}(s)-\lambda_{j}%
(s)}\text{.}%
\]

\end{theorem}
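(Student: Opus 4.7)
The plan is to apply Proposition \ref{ito2} to the scalar function $\Phi_m$ supplied by Lemma \ref{Der}, and then translate the resulting Itô expansion into the concrete form (\ref{dyn})--(\ref{d2}) by reading off the first and second partial derivatives of $\Phi_m$ from the Hadamard variation formulas.

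First I would verify that $\Phi_m$ is a legitimate input to Proposition \ref{ito2}. By absolute continuity of $X_t$ for each $t>0$, Lemma \ref{Der} provides an open set $\widetilde{G}\subset\mathbb{R}^{d^{2}}$ with $\mathrm{Leb}(\mathbb{R}^{d^{2}}\setminus\widetilde{G})=0$ and a $C^{\infty}$ function $\Phi_m$ on $\widetilde{G}$ with $\lambda_{m}(X_{s})=\Phi_{m}(X_{s})$ a.s. The hypothesis that $X$ is locally bounded within (the image of) $\widetilde{G}$, combined with Proposition \ref{ito2}, forces both $X_{s}$ and $X_{s-}$ into a compact convex subset of $\widetilde{G}$ for every $s\geq 0$, so $\Phi_{m}$ and its partials are uniformly defined along the whole path. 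Applying Proposition \ref{ito2} with $f=\Phi_{m}$ then yields (\ref{dyn}) directly and in particular shows that $\lambda_{m}(X_{t})$ is a semimartingale.

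Part (a) is then immediate: the matrix representation (\ref{DM}) of the Fréchet derivative, evaluated at $X_{s-}$ and combined with formulas (\ref{fix})--(\ref{fiy}) from Lemma \ref{Der}, produces the matrix $D\lambda_{m}(X_{s-})$ with entries (\ref{dx}) and (\ref{dy}). For part (b), one expands the double sum (\ref{OSD}) against the coefficients $A_{ij,kh}$ and substitutes the second partials (\ref{fix2}), (\ref{fiy2}). The key structural observation is that the Hadamard variation formula produces only \emph{pure} second partials of $\Phi_{m}$ in a single variable $x_{kh}$ or $y_{kh}$, while all genuinely mixed second partials vanish. This collapses the index match in (\ref{OSD}) to $(i,j)=(k,h)$, so only $A_{kk,kk}$ and $A_{kh,kh}$ survive; expanding the moduli $|\bar u_{km}u_{hj}\pm\bar u_{hm}u_{kj}|^{2}$ then groups the remaining terms into (\ref{d2}).

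The main obstacle, already foreshadowed in the introduction, is not the algebra but the analytic point that $\Phi_{m}$ is only smooth on $\widetilde{G}$: absolute continuity guarantees $X_{s}\in\widetilde{G}$ almost surely at every fixed time $s>0$, but the left limit $X_{s-}$ could a priori land on the zero-measure singular locus where eigenvalues collide and $\Phi_{m}$ loses smoothness. It is precisely the local boundedness assumption within $\widetilde{G}$ that rules this out by confining the trajectory to compact subsets of $\widetilde{G}$ uniformly in $s$, which is what legitimizes Proposition \ref{ito2} along the whole path and makes the explicit identification of the drift, diffusion and jump terms valid.
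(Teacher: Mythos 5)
Your proposal follows exactly the paper's own argument: apply Proposition \ref{ito2} with $f=\Phi_{m}$ from Lemma \ref{Der} (using local boundedness within $\widetilde{G}$ to keep $X_{s}$ and $X_{s-}$ inside the smoothness region), read off the first derivatives (\ref{dx})--(\ref{dy}) from (\ref{fix})--(\ref{fiy}), and for part (b) invoke the vanishing of all genuinely mixed second partials so that (\ref{OSD}) collapses to the diagonal terms $A_{kk,kk}$ and $A_{kh,kh}$, after which the identity $|a+b|^{2}+|a-b|^{2}=2(|a|^{2}+|b|^{2})$ applied to $a=\overline{u}_{km}u_{hj}$, $b=\overline{u}_{hm}u_{kj}$ yields (\ref{d2}). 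This is essentially the same proof as in the paper, including the same key structural claim about the mixed partials.
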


\begin{proof}
By Lemma \ref{Der}, for each $m=1,2,...,d$ there exists $\Phi_{m}%
:\mathbb{R}^{d^{2}}\rightarrow\mathbb{R}$ which is $C^{\infty}$ in an open
subset $\widetilde{G}$ of $\mathbb{R}^{d^{2}}$ such that $\lambda_{m}%
(t)=\Phi_{m}\left(  X_{t}\right)  $ for each $t>0$, and the $\Phi_{m}$ satisfy
(\ref{fix})--(\ref{fiy2}).

Now, taking $f=\lambda_{m}$ in Proposition \ref{ito2}, where the open set $G$
is identified with the open subset $\widetilde{G}$, we have that $\lambda
_{m}\left(  X_{t}\right)  $ is a semimartingale where formula (\ref{iflp})
becomes (\ref{dyn}). The derivative matrix $D\lambda_{m}\left(  X_{s-}\right)
$ in (\ref{dyn}), see (\ref{DM}), is given by $D\lambda_{m}\left(
X_{s-}\right)  =\left(  \frac{\partial\Phi_{m}(s-)}{\partial x_{kh}%
}+\mathrm{i}\frac{\partial\Phi_{m}(s-)}{\partial y_{kh}}1_{\left\{  k\neq
h\right\}  }\right)  _{1\leq k\leq h\leq d},$ where the expressions of
$\frac{\partial\Phi_{m}(s-)}{\partial x_{kh}}$ and $\frac{\partial\Phi
_{m}(s-)}{\partial y_{kh}}$ in (\ref{fix}) and (\ref{fiy}) turn out to be in
(\ref{dx}) and (\ref{dy}).

The second derivative term in (\ref{dyn}) is given by $\sum\limits_{\left(
i,j\right)  \left(  k,h\right)  }A_{ij,kh}\frac{\partial^{2}}{\partial
z_{ij}z_{kh}}\lambda_{m}\left(  X_{s}\right)  =\sum\limits_{\left(
i,j\right)  \left(  k,h\right)  }A_{ij,kh}\frac{\partial^{2}}{\partial
z_{ij}z_{kh}}\Phi_{m}(s)$. It can be seen that $\frac{\partial^{2}}{\partial
x_{ii}x_{jj}}\Phi_{m}(s)=0$ for $i\neq j$, $\frac{\partial^{2}}{\partial
x_{ij}x_{kh}}\Phi_{m}(s)=0$ for $i<j,k<h$ with $ij\neq kh,\frac{\partial^{2}%
}{\partial x_{ij}y_{kh}}\Phi_{m}(s)=0$ for $i\leq j,k<h$, and $\frac
{\partial^{2}}{\partial y_{ij}y_{kh}}\Phi_{m}(s)=0$ for $i<j,k<h$ with $ij\neq
kh$.

Using the previous computations, the operator (\ref{OSD}) applied to $\Phi
_{m}(s)$ gives%

\begin{align}
\sum\limits_{\left(  i,j\right)  \left(  k,h\right)  }A_{ij,kh}\frac
{\partial^{2}}{\partial z_{ij}z_{kh}}\Phi_{m}(s)  &  =\sum\limits_{k=1}%
^{d}A_{kk,kk}\frac{\partial^{2}}{\partial x_{kk}^{2}}\Phi_{m}(X_{s}%
)\nonumber\\
&  +\sum\limits_{1\leq k<h\leq d}A_{kh,kh}\left(  \frac{\partial^{2}}{\partial
x_{kh}^{2}}+\frac{\partial^{2}}{\partial y_{kh}^{2}}\right)  \Phi_{m}%
(X_{s})\text{,} \label{ofi}%
\end{align}
and now from (\ref{fix2}) and (\ref{fiy2}) we get%
\begin{equation}
\sum\limits_{k=1}^{d}A_{kk,kk}\frac{\partial^{2}}{\partial x_{kk}^{2}}\Phi
_{m}(X_{s})=2\sum\limits_{k=1}^{d}A_{kk,kk}\sum\limits_{j\neq m}%
\frac{\left\vert \overline{u}_{km}u_{kj}\right\vert ^{2}}{\lambda_{m}%
-\lambda_{j}}\text{,} \label{ofi1}%
\end{equation}
and%
\begin{equation}
\sum\limits_{1\leq k<h\leq d}A_{kh,kh}\left(  \frac{\partial^{2}}{\partial
x_{kh}^{2}}+\frac{\partial^{2}}{\partial y_{kh}^{2}}\right)  \Phi_{m}%
(X_{s})~~~~~~~~~~~~~~~~~~~~~~~~~~~~~~~~~~~~~~~~~~~~~\ \ \ ~\ ~~~~~~~~~~~~~~
\label{ofi2}%
\end{equation}%
\begin{align*}
~~~~~~~~~~~~~~~~~~~~~~~~~~  &  =2\sum\limits_{1\leq k<h\leq d}A_{kh,kh}%
\sum\limits_{j\neq m}\left\{  \frac{\left\vert \overline{u}_{km}%
u_{hj}+\overline{u}_{hm}u_{kj}\right\vert ^{2}}{\lambda_{m}-\lambda_{j}}%
+\frac{\left\vert \overline{u}_{km}u_{hj}-\overline{u}_{hm}u_{kj}\right\vert
^{2}}{\lambda_{m}-\lambda_{j}}\right\} \\
&  =2\sum\limits_{1\leq k<h\leq d}A_{kh,kh}\sum\limits_{j\neq m}%
2\frac{\left\vert \overline{u}_{km}(s)u_{hj}(s)\right\vert ^{2}+\left\vert
\overline{u}_{hm}(s)u_{kj}(s)\right\vert ^{2}}{\lambda_{m}(s)-\lambda_{j}%
(s)}\text{.}%
\end{align*}
Finally, (\ref{d2}) follows from (\ref{ofi}), (\ref{ofi1}), (\ref{ofi2}).
\end{proof}

\begin{remark}
\label{rem}(a) Theorem \ref{Dynamics} is the analog of the Dyson--Brownian
motion (\ref{1}) for a Hermitian L\'{e}vy process. In this case, $X(t)$ has a
simple spectrum almost surely for each $t>0.$ The problem of noncolliding
eigenvalues for each $t>0$ almost surely will be considered elsewhere.

(b) However, we observe that the repulsion term between each pair of
eigenvalues $1/(\lambda_{i}(s)-\lambda_{j}(s))$, $i\neq j,$ appears in
(\ref{dyn}) and (\ref{d2}) when there is a Gaussian component (i.e.,
$\mathcal{A}\neq0$).

(c) The corresponding expression of Proposition \ref{ito3} can be obtained for
the dynamics of eigenvalues in terms of the local martingale and bounded
variation components of the eigenvalues. Let $\left\{  X_{t}:t\geq0\right\}  $
be a Hermitian L\'{e}vy process with triplet $\left(  \mathcal{A},\nu
,\Psi\right)  $ satisfying the same conditions as in Theorem \ref{Dynamics}.
For each $m=1,2,...,d$ it holds the decomposition $\lambda_{m}(X_{t}%
)=M_{t}^{m}+V_{t}^{m}$ where $M_{t}^{m}$ is a martingale and $V_{t}^{m}$ is a
process of bounded variation such that
\begin{align*}
M_{t}^{m}  &  =\lambda_{m}\left(  X_{0}\right)  +\mathrm{tr}\left(
\int\nolimits_{0}^{t}D\lambda_{m}\left(  X_{s-}\right)  B_{\mathcal{A}%
}(ds)\right) \\
&  ~~~~~~~~~~~~~~~~~~~~~~~~~~~~~~~~~~~~~~~~+\int\nolimits_{\left(  0,t\right]
\times\mathbb{H}_{d}^{0}}\left[  \lambda_{m}\left(  X_{s-}+y\right)
-\lambda_{m}\left(  X_{s-}\right)  \right]  \widetilde{J}_{X}\left(
ds,dy\right)
\end{align*}
and%
\[
V_{t}^{m}=\frac{1}{2}\int\nolimits_{0}^{t}\sum\limits_{\left(  i,j\right)
\left(  k,h\right)  }A_{ij,kh}\frac{\partial^{2}}{\partial z_{ij}z_{kh}%
}\lambda_{m}\left(  X_{s}\right)  ds+\mathrm{tr}\left(  \int\nolimits_{0}%
^{t}\Psi D\lambda_{m}\left(  X_{s}\right)  ds\right)  ~~~~~~~~~~~~~~~~~
\]%
\[
~~~~~~~~~~~~~~~~+\int\nolimits_{\left(  0,t\right]  \times\mathbb{H}_{d}^{0}%
}\left[  \lambda_{m}\left(  X_{s-}+y\right)  -\lambda_{m}\left(
X_{s-}\right)  -\mathrm{tr}\left(  D\lambda_{m}\left(  X_{s-}\right)
y\right)  1_{\{ \left\Vert y\right\Vert \leq1\}}\right]  ds\nu(dy).
\]

d) Hermitian L\'{e}vy processes with rank one jumps and with their
distributions invariant under unitary conjugations appear in the context of a
connection between classical and free one-dimensional infinitely divisible
distributions \cite{BG}, \cite{CD}, \cite{DRA}, \cite{DPARA}. Let us consider
an example with the triplet $\left(  \sigma^{2}\mathrm{I}_{d^{2}},\nu
,\psi\mathrm{I}_{d}\right)  $. From (c), for each $m=1,...,d,$ the eigenvalue
$\lambda_{m}$ has the decomposition $\lambda_{m}(t)=M_{t}^{m}+V_{t}^{m}$,
where
\begin{align*}
M_{t}^{m}  &  =\lambda_{m}\left(  X_{0}\right)  +\mathrm{tr}\left(
\int\nolimits_{0}^{t}D\lambda_{m}\left(  X_{s-}\right)  B_{\mathcal{A}%
}(ds)\right) \\
&  ~~~~~~~~~~~~~~~~~~~~~+\int\nolimits_{\left(  0,t\right]  \times
\mathbb{H}_{d}^{1}}\left[  \lambda_{m}\left(  X_{s-}+u\right)  -\lambda
_{m}\left(  X_{s-}\right)  \right]  \left(  J_{X}\left(  ds,du\right)
-ds\nu(du)\right)
\end{align*}
and%
\begin{align*}
V_{t}^{m}  &  =2\sigma^{2}\int\nolimits_{0}^{t}\sum\limits_{j\neq m}\frac
{1}{\lambda_{m}\left(  X_{s}\right)  -\lambda_{j}\left(  X_{s}\right)
}ds+\psi\mathrm{tr}\left(  \int\nolimits_{0}^{t}D\lambda_{m}\left(
X_{s}\right)  ds\right) \\
&  ~~~~+\int\nolimits_{\left(  0,t\right]  \times\mathbb{H}_{d}^{1}}\left[
\lambda_{m}\left(  X_{s-}+u\right)  -\lambda_{m}\left(  X_{s-}\right)
-\mathrm{tr}\left(  D\lambda_{m}\left(  X_{s-}\right)  u\right)  1_{\{
\left\Vert u\right\Vert \leq1\}}\right]  ds\nu(du).
\end{align*}
In particular, when the process is symmetric, the L\'{e}vy measure is
symmetric and $\psi=0,$ and hence%
\[
V_{t}^{m}=2\sigma^{2}\int\nolimits_{0}^{t}\sum\limits_{j\neq m}\frac
{1}{\lambda_{m}\left(  X_{s}\right)  -\lambda_{j}\left(  X_{s}\right)
}ds+\int\nolimits_{\left(  0,t\right]  \times\mathbb{H}_{d}^{1}}\left[
\lambda_{m}\left(  X_{s-}+u\right)  -\lambda_{m}\left(  X_{s-}\right)
\right]  ds\nu(du).
\]

\end{remark}

\noindent\textbf{Acknowledgement}. \emph{Most of this work was done while
Alfonso Rocha-Arteaga was visiting CIMAT in the summer of 2014 and Victor
P\'{e}rez-Abreu was visiting the Autonomous University of Sinaloa in the
winter of 2015. The authors would like to thank Michal Wojtylak for useful
comments on the use of Proposition }$2.2$ of \cite{RaWo11}\emph{, and Armando
Dom\'{\i}nguez, Jos\'{e}-Luis P\'{e}rez and the referee for valuable
comments.}

\end{document}